\newcommand{\abs}[1]{\left\vert#1\right\vert}
\newcommand{\norm}[1]{\left\Vert#1\right\Vert}
\def\tnorm{\vert\!\vert\!\vert}
\def\R{\mathbb{R}}
\DeclareMathOperator{\Lip}{Lip}
\newtheorem{thm}{Theorem}[section]
\newtheorem{lem}[thm]{Lemma}
\theoremstyle{definition}
\newtheorem{dfn}[thm]{Definition}
\theoremstyle{remark}
\newtheorem{rem}[thm]{Remark}
\renewcommand{\theenumi}{\roman{enumi}}
\begin{document}

\title[Measure solutions for some models in population dynamics]{Measure solutions for some models in population dynamics}

\author[J. A. Ca\~nizo]{Jos\'e A. Ca\~nizo}
\address{Departament de Matem\`atiques, Universitat Aut\`onoma de Barcelona,
  08193 Bellaterra (Barcelona), Spain.}
\email{canizo@mat.uab.es}

\author[J. A. Carrillo]{Jos\'e A. Carrillo}
\address{ICREA and Departament de Matem\`atiques, Universitat Aut\`onoma de Bar\-ce\-lo\-na,
08193 Bellaterra (Barcelona), Spain. \textit{On leave from:}
Department of Mathematics, Imperial College London, London SW7
2AZ, UK.}
\email{carrillo@mat.uab.es}

\author[S. Cuadrado]{S\'\i lvia Cuadrado} \address{Departament de
  Matem\`atiques, Universitat Aut\`onoma de Barcelona, 08193
  Bellaterra (Barcelona), Spain.}
\email{silvia@mat.uab.cat}

\thanks{The authors were partially supported by the Ministerio de Ciencia e Innovaci\'on, grant
MTM2011-27739-C04-02, and by the Ag\`encia de Gesti\'o d'Ajuts
Universitaris i de Recerca-Generalitat de Catalunya, grant
2009-SGR-345.}

\keywords{population dynamics, selection-mutation, measure solutions,
  transport distances}

\subjclass[2010]{35Q92, 92D15, 92D25}

\begin{abstract}
  We give a direct proof of well-posedness of solutions to general
  selection-mutation and structured population models with measures as
  initial data. This is motivated by the fact that some stationary
  states of these models are measures and not $L^1$ functions, so the
  measures are a more natural space to study their dynamics. Our
  techniques are based on distances between measures appearing in
  optimal transport and common arguments involving Picard
  iterations. These tools provide a simplification of previous
  approaches and are applicable or adaptable to a wide variety of
  models in population dynamics.
\end{abstract}

\maketitle

\section{Introduction}

Selection-mutation equations are models for structured populations
with respect to continuous phenotypical evolutionary traits. They are
usually written as equations for densities on the parameter space of
phenotypes \cite{B,cc,magalwebb,ccdr}, that is, they are usually
formulated in $L^{1}$ spaces. However, for some of these models a more
natural space to study the time evolution is the space of positive
measures, since it has been proven in some cases \cite{cc,c} that for
a small mutation rate the steady states tend to concentrate in a Dirac
mass at the evolutionarily stable strategy value. More generally, the
need for a theory of well-posedness in measures for structured
population models was mentioned in
\cite{DiekmannMetz,DiekmannEtAl,WebbStructured}.

Some efforts in this direction have been directed at particular models
in population dynamics: pure selection models for phenotypic traits in
the space of measures were recently studied in
\cite{acklehfitzthieme,cressman}, while in \cite{burger} a particular
case of a selection-mutation equation for a genetic trait in the space
of measures is analyzed. More recently in \cite{CA} the author shows
well-posedness and studies asymptotic behavior of a selection-mutation
equation.

Our aim here is to give a simple and general proof of well-posedness
in the space of measures for a class of models that will include a
wide range of selection-mutation models as well as many classical
nonlinear structured population models \cite{B, DiekmannEtAl,
  DiekmannMetz, Perthame, WebbStructured, WebbTheoryNonLinear}. The
basic model we consider is an abstract Cauchy problem of the form
\begin{subequations}
  \label{eq:abstract}
  \begin{gather}
    \label{eq:abstract-eq}
    \partial_t u + \nabla \cdot(F(x) u) = N(t,u),
    \\
    \label{eq:abstract-in}
    u(0,x) = u_0(x) \qquad (x \in \R^d).
  \end{gather}
\end{subequations}
where $u = u(t,x)$ is the unknown, which depends on $t \geq 0$ and $x
\in \R ^d$ (in any dimension $d \geq 1$). Any differential terms in
the equation should be included in the term $\nabla \cdot(F(x) u)$
(e.g., growth or aging terms); the term $N(t,u)$ (a measure depending
on $x$) may include nonlinear birth and death rates and
selection-mutation interactions. Several examples in population
dynamics are given in Section \ref{sec:examples}, where it is also
shown how the abstract result may be applied to equations whose domain
is not the whole space $\R^d$.

Our proof of well-posedness in the space of measures to
\eqref{eq:abstract} will be based on techniques stemming from mass
transportation and semigroup theory. Similar ideas were already used
for studying the mean-field limit of kinetic equations such as the
classical Vlasov equation \cite{D} and more recently in swarming
models \cite{CCR}. The main advantage of our approach is its
simplicity, coming from the use of techniques already well developed
in other fields, and its flexibility, which allows it to be adapted to
a wide range of models in population dynamics.

The well-posedness of measure solutions to some equations of the form
\eqref{eq:abstract} has recently been analysed by different although
related techniques in \cite{GM,GwiazdaThomasEtAl}.  selection-mutation
models were not included in their formulation but
Sharpe-Lotka-McKendrick-type models (age-structured models) in which
the boundary condition is introduced as a measure-valued right-hand
side of the equation are treated, see subsection 3.3 for related
results.

A nonlinear semigroup approach using the splitting method for the
transport $\nabla \cdot(F(x) u)$ and the right-hand side $N(t,u)$
terms was introduced in the more recent paper \cite{CCGU} to treat
equations of the form \eqref{eq:abstract}. Here, we give a direct and
simpler proof based on Picard iterations in the right metric space to
conclude existence, uniqueness, and continuous dependence without
resorting to the splitting method or nonlinear semigroup techniques.

The organization of the paper is as follows: in Section 2 we show that
the abstract Cauchy problem \eqref{eq:abstract} is well posed for
measures as initial data in the so-called bounded Lipschitz distance
under reasonable Lipschitz conditions on $F$ and $N$ similar to the
ones needed in \cite{CCGU}. Then, Section 3 is devoted to the
application of these results to more explicit examples, mainly some
nonlinear selection-mutation models in subsections 3.1 and 3.2 where
$F=0$, but also some mixed nonlinear structured
population/selection-mutation models in subsection 3.3 and pure
structured population models in subsection 3.4 where $F\neq 0$. These
applications highlight the wide applicability of the abstract theorem
in Section 2 for this type of models, setting a possible functional
framework for stability and asymptotic convergence towards measure
solutions \cite{acklehfitzthieme,cc}.


\section{Well-posedness theory}

\subsection{The bounded Lipschitz norm}

Let us start with a quick summary of the definition and properties of
the bounded Lipschitz norm, also called flat metric
\cite{Spohn,Villani}. We denote by $\mathcal{M}(\R^d)$ the
set of Radon measures in $\R^d$, and consider the space of Lipschitz
functions $W^{1,\infty}(\R^d)$ endowed with the norm
$\norm{\psi}_{1,\infty}:= \norm{\psi}_\infty + \Lip(\psi)$, with
$\Lip(\psi)$ the Lipschitz constant of $\psi$.

\begin{dfn}\label{dfn:norm}
Given a Radon measure $\mu \in \mathcal{M}(\R^d)$ we define its
\emph{bounded Lipschitz norm} $\norm{\mu}_{\mathcal{M}(\R^d)}$,
$\norm{\mu}$ when there is no ambiguity, by
  \begin{equation*}
    \norm{\mu} :=
    \sup_{\psi \in \mathcal{L}} \abs{
      \int_{\R^d} \psi\, d\mu
    },
  \end{equation*}
  where $\mathcal{L}$ is the set of bounded functions $\psi:\R^d \to
  \R$ with $\Lip(\psi)\leq 1$ and $\|\psi\|_\infty \leq 1$, i.e.,
  $\mathcal{L} := \{ \psi \in W^{1,\infty}(\R^d) \mid
  \norm{\psi}_{1,\infty} \leq 1 \}$.
\end{dfn}
One sees from the definition that this is just the dual norm of
$W^{1,\infty}(\R^d)$, and that by duality for any $\psi \in
W^{1,\infty}(\R^d)$,
\begin{equation*}
  \int_{\R^d} \psi \, d\mu \leq \|\psi\|_{1,\infty} \|\mu\|.
\end{equation*}
We remark that on probability measures this can also be defined as a
distance of Kantorovich-Rubinstein, or Wasserstein, type: when $\mu$,
$\nu$ are probability measures it holds that
\begin{equation*}
  \norm{\mu - \nu}
  =
  \inf_{\sigma \in \Sigma} \,
  \int_{\R^d \times \R^d}
  \min\{\abs{x-y}, 1\}\, d\sigma(x,y) \,,
\end{equation*}
where $\Sigma$ is the set of \emph{transference plans} between $\mu$
and $\nu$, that is, probability measures on $\R^d \times \R^d$ with
marginals $\mu$ and $\nu$ \cite{Villani}.

We will need to use the following simple result on this distance:
\begin{lem}
  \label{lem:product-bound}
  If $b \in W^{1,\infty}(\R^d)$ and $\mu \in \mathcal{M}(\R^d)$, then
  $b \mu \in \mathcal{M}(\R^d)$ and
  \begin{equation*}
    \| b \mu \| \leq \|b\|_{1,\infty} \|\mu\|.
  \end{equation*}
\end{lem}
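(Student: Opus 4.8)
The plan is to bound $\int_{\R^d} \psi \, d(b\mu)$ for an arbitrary test function $\psi \in \mathcal{L}$ by writing $\int \psi \, d(b\mu) = \int (\psi b) \, d\mu$ and then using the duality inequality $\int_{\R^d} \varphi \, d\mu \leq \|\varphi\|_{1,\infty} \|\mu\|$ recorded just after Definition~\ref{dfn:norm}, applied with $\varphi = \psi b$. Taking the supremum over $\psi \in \mathcal{L}$ then gives $\|b\mu\| \leq \big(\sup_{\psi \in \mathcal{L}} \|\psi b\|_{1,\infty}\big)\|\mu\|$, so everything reduces to the elementary estimate $\|\psi b\|_{1,\infty} \leq \|b\|_{1,\infty}$ whenever $\|\psi\|_{1,\infty} \leq 1$. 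First, though, one should check that $b\mu$ is a well-defined Radon measure: since $b$ is bounded and (being Lipschitz) Borel measurable, $b\mu$ is a signed measure, and it is finite in total variation on compact sets because $\mu$ is Radon, so $b\mu \in \mathcal{M}(\R^d)$.

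The core of the argument is the submultiplicativity-type estimate for the $W^{1,\infty}$ norm of a product. For the sup norm, $\|\psi b\|_\infty \leq \|\psi\|_\infty \|b\|_\infty$. For the Lipschitz seminorm, the standard product rule for Lipschitz functions gives $\Lip(\psi b) \leq \|\psi\|_\infty \Lip(b) + \|b\|_\infty \Lip(\psi)$, which one sees from $|\psi(x)b(x) - \psi(y)b(y)| \leq |\psi(x)||b(x)-b(y)| + |b(y)||\psi(x)-\psi(y)|$. Adding these two bounds,
\begin{equation*}
  \|\psi b\|_{1,\infty} \leq \|\psi\|_\infty \|b\|_\infty + \|\psi\|_\infty \Lip(b) + \|b\|_\infty \Lip(\psi) \leq \|\psi\|_\infty \|b\|_{1,\infty} + \|b\|_\infty \Lip(\psi).
\end{equation*}
Since $\|\psi\|_\infty \leq 1$ and $\Lip(\psi) \leq 1$ for $\psi \in \mathcal{L}$, and $\|b\|_\infty \leq \|b\|_{1,\infty}$, the right-hand side is at most $\|b\|_{1,\infty} + \Lip(b) \cdot 1$... wait — one must be slightly careful here: the crude bound would give $2\|b\|_{1,\infty}$, so to obtain the sharp constant $\|b\|_{1,\infty}$ one instead keeps the terms grouped as $\|\psi\|_\infty\big(\|b\|_\infty + \Lip(b)\big) + \Lip(\psi)\|b\|_\infty$ and notes this equals $\|\psi\|_\infty\|b\|_{1,\infty} + \Lip(\psi)\|b\|_\infty \leq \|b\|_{1,\infty}$ only when $\|\psi\|_\infty + $ (something) $\leq 1$, which need not hold. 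So the honest clean statement is $\|\psi b\|_{1,\infty} \leq \|b\|_{1,\infty}\|\psi\|_{1,\infty}$: indeed $\|\psi\|_\infty\|b\|_{1,\infty} + \|b\|_\infty\Lip(\psi) \leq \|b\|_{1,\infty}(\|\psi\|_\infty + \Lip(\psi)) = \|b\|_{1,\infty}\|\psi\|_{1,\infty} \leq \|b\|_{1,\infty}$.

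Putting it together: for every $\psi \in \mathcal{L}$ we have $\int_{\R^d}\psi\,d(b\mu) = \int_{\R^d}(\psi b)\,d\mu \leq \|\psi b\|_{1,\infty}\|\mu\| \leq \|b\|_{1,\infty}\|\mu\|$, and taking the supremum over $\psi \in \mathcal{L}$ yields $\|b\mu\| \leq \|b\|_{1,\infty}\|\mu\|$, as claimed. I do not anticipate a genuine obstacle here; the only point requiring a little care is the product rule for Lipschitz constants and the bookkeeping of which factor absorbs which seminorm so as to land exactly on the constant $\|b\|_{1,\infty}$ rather than $2\|b\|_{1,\infty}$ — as shown above, grouping the terms as $\|\psi\|_{1,\infty}\|b\|_{1,\infty}$ and then using $\|\psi\|_{1,\infty}\leq 1$ does the job.
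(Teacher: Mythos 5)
Your proposal is correct and follows essentially the same route as the paper: test against $\psi\in\mathcal{L}$, use the duality bound $\int (b\psi)\,d\mu\leq\|b\psi\|_{1,\infty}\|\mu\|$, and control $\|b\psi\|_{1,\infty}$ via the Lipschitz product rule, with the final grouping of terms (yours yielding the submultiplicative form $\|b\psi\|_{1,\infty}\leq\|b\|_{1,\infty}\|\psi\|_{1,\infty}$, the paper's absorbing $\|\psi\|_\infty+\Lip(\psi)\leq 1$ directly) being an inessential bookkeeping difference. No gaps; the measurability remark on $b\mu$ is fine.
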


\begin{proof}
  It is clear that $b \mu \in \mathcal{M}(\R^d)$, since $b$ is a
  bounded continuous function. We integrate against $\psi \in
  W^{1,\infty}$ with $\|\psi\|_{1,\infty} \leq 1$ to find
  \begin{equation}
    \label{l1}
    \left| \int_{\R^d} b \psi \,d\mu \right|
    \leq
    \| b \psi \|_{1,\infty} \|\mu\|
  \end{equation}
  On the other hand, we have
  \begin{gather*}
    \|b \psi\|_{\infty} \leq \|b\|_\infty \|\psi\|_\infty,
    \\
    \Lip(b \psi)
    \leq \Lip(b) \|\psi\|_\infty
    + \|b\|_\infty \Lip(\psi) ,
  \end{gather*}
  so
  \begin{align*}
    \|b \psi\|_{1,\infty}
    &=
    \|b \psi\|_{\infty} + \Lip(b \psi)
    \leq
    \|b\|_\infty \big(\|\psi\|_\infty + \Lip(\psi)\big)
    + \Lip(b) \|\psi\|_\infty\\
    &\leq
    \|b\|_\infty + \Lip(b) = \|b\|_{1,\infty}.
  \end{align*}
  Plugging this into eq. (\ref{l1}) finishes the proof.
\end{proof}

In the rest of this paper, we will work with measure solutions to some
evolution partial differential equations and therefore, we will work
with the space of bounded continuous curves on the set of measures
$BC(I;\mathcal{M}(\R^d))$ depending on $t \in I$ denoting the time
variable, with $I=[0,T]$ for some $T>0$ or $I=[0,\infty)$. The
continuity of the curves of measures $t \mapsto \mu(t)$ is always
understood to be with respect to the bounded Lipschitz norm. We warn
the reader that elements in $BC(I;\mathcal{M}(\R^d))$ will often be
denoted as if they were absolutely continuous densities with respect
to Lebesgue with the form $d\mu(t) = u(t,x)\,dx$ for the sake of
simplicity.

The standard total variation norm for measures will be denoted by
\mbox{$\|\cdot\|_{\mathrm{TV}}$}. We remark the natural necessity of
the bounded Lipschitz distance (or similar distances between measures)
to work with transport evolution equations, as opposed to the total
variation norm. In fact, take any injective continuous path
$x:[0,T]\longrightarrow \R^d$ and take the curve of measures $\mu$
defined by $t\longrightarrow \delta_{x(t)}$. It is easy to check that
$\mu$ belongs to $BC([0,T];\mathcal{M}(\R^d))$ while
$\|\mu(t)-\mu(s)\|_{\mathrm{TV}}=2$ for all $0\leq t < s\leq T$.

Although all models in population dynamics study the evolution of
positive measures (number density of individuals with respect to
some variables), let us mention that we need to use the bounded
Lipschitz norm and not other optimal transport distances since the
total mass (total variation) of measure solutions will typically
not be preserved in time. We will denote by $B_{BL}(R)$, resp.
$B_{\mathrm{TV}}(R)$, the ball of radius $R$ centered at $0$ in the Bounded
Lipschitz norm and in the total variation norm resp. Finally, let us
point out that balls $B_{\mathrm{TV}}(R)$ in $\mathcal{M}(\R^d)$ with
respect to the total variation norm are closed in the bounded
Lipschitz norm by simple weak convergence arguments.

\subsection{An abstract result}

We consider the abstract evolution equation for measures given in
\eqref{eq:abstract}, which we recall here:
\begin{align*}
    &\partial_t u + \nabla_x \cdot(F(x) u) = N(t,u),
    \\
    &u(0,x) = u_0(x) \qquad (x \in \R^d).
\end{align*}
Here $u = u(t,x)$ is the unknown, which depends on $t \geq 0$ and
$x \in \R ^d$, under the following hypotheses on $F$, $N$ and
$u_0$:
\renewcommand{\theenumi}{H\arabic{enumi}}
\begin{enumerate}
\item \label{H1}
  $u_0 \in \mathcal{M}(\R^d)$.
\item \label{H2}
  $F: \R^d \to \R^d$ is a bounded Lipschitz map.
\item \label{H3} $N: [0,+\infty) \times \mathcal{M}(\R^d) \to
  \mathcal{M}(\R^d)$ is a continuous function both in $t$ and $u$
  (considering here the topologies induced by the bounded Lipschitz
  norm on $\mathcal{M}(\R^d)$ and the usual topology on
  $[0,+\infty)$.)
\item \label{H4}
  $N$ is locally Lipschitz in its second variable, i.e., for
  every bounded set $K \subseteq [0,+\infty) \times \mathcal{M}(\R^d)$
  there exists $L_N = L_N(K) > 0$ such that
  \begin{equation*}
    \| N(t,u_1) - N(t,u_2) \| \leq L_N \|u_1 - u_2\|
    \quad \forall\ (t,u_1), (t,u_2) \in K.
  \end{equation*}
\item \label{H5}
  $N$ carries bounded sets in the total variation norm to
  bounded sets in the total variation norm: for each $R
  > 0$ there exists $C_R \geq 0$ such that
  $\| N(t,u) \|_{\mathrm{TV}} \leq C_R$
  for all $t \geq 0$ and $u \in \mathcal{M}(\R^d)$ with $\| u \|_{\mathrm{TV}}
  \leq R$.
\end{enumerate}
\renewcommand{\theenumi}{\roman{enumi}}

\begin{dfn}
  \label{dfn:solution-abstract}
  Assume Hypotheses \eqref{H1}-\eqref{H5}, and take $T \in
  (0,+\infty]$. We say $u\in BC([0,T);\mathcal{M}(\R^d))$
  is a \emph{solution of equation \eqref{eq:abstract} on $[0,T)$ with
  initial condition $u_0$} when, for every $\phi \in \mathcal{C}^\infty_0([0,T) \times \R^d)$,
  \begin{multline*}
      - \int_0^T \int_{\R^d} \partial_t \phi(t,x) \,u(t,x) \,dx -
      \int_{\R^d} \phi(0,x) \,u_0(x) \,dx
      \\
      - \int_0^T \int_{\R^d} F(x)
      \nabla \phi(t,x) \,u(t,x)\,dx \,dt
      = \int_0^T \int_{\R^d} \phi(t,x) N(t,u)(x) \,dx \,dt
      .
  \end{multline*}
\end{dfn}

\begin{thm}[Well-posedness of the abstract equation]
  \label{thm:well-posedness}
  Assume Hypotheses \eqref{H1}-\eqref{H5}. There exists a maximal time $T
  > 0$ such that there is a unique solution $u\in B([0,T);\mathcal{M}(\R^d))$
  of equation \eqref{eq:abstract}. In addition:
  \begin{enumerate}
  \item Either $T = +\infty$ or $\lim_{t \to T}\|u(t)\|_{\mathrm{TV}} =
    +\infty$.
  \item This solution depends continuously on the initial condition
    $u^0$ in the bounded Lipschitz norm: take two solutions
    $u_1,u_2$ of equation \eqref{eq:abstract} on $[0,T)$ with initial
    conditions $u_1^0$, $u_2^0$, respectively. Assume also that
  \begin{equation*}
    \|u_1(t)\|, \|u_2(t)\| \leq R
    \qquad (t \in [0,T)),
  \end{equation*}
  and take $L_N$ to be the Lipschitz constant of $N$ with respect to
  the second variable on the set $[0,T) \times B_{BL}(R) \subseteq
  [0,+\infty) \times \mathcal{M}(\R^d)$. Then,
  \begin{equation*}
    \|u_1(t) - u_2(t)\|
    \leq e^{(L_F+L_N) t} \|u_1^0 - u_2^0\|
    \qquad (t \in [0,T)).
  \end{equation*}
  \end{enumerate}
\end{thm}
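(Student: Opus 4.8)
The plan is to rewrite \eqref{eq:abstract} in Duhamel (mild) form and to solve the resulting fixed-point equation by a Picard/contraction argument in a complete metric space that carries a total variation bound. Since $F$ is bounded and Lipschitz by \eqref{H2}, the autonomous ODE $\dot X=F(X)$ generates a flow $(\Phi_t)_{t\in\R}$ on $\R^d$, and Gronwall's inequality gives $\Lip(\Phi_t)\le e^{L_F t}$ for $t\ge 0$ with $L_F:=\Lip(F)$. Introducing the push-forward operators $T_t\mu:=(\Phi_t)_\#\mu$, I would first record their elementary properties, all essentially immediate from Definition~\ref{dfn:norm}: $(T_t)_{t\ge 0}$ is a semigroup with $T_0=\mathrm{Id}$; each $T_t$ preserves the total variation norm, since $\Phi_t$ is a bijection of $\R^d$; for $t\ge 0$ and $\mu,\nu\in\mathcal M(\R^d)$ one has $\norm{T_t\mu-T_t\nu}\le e^{L_F t}\norm{\mu-\nu}$, because for any $\psi\in\mathcal L$ the rescaled function $e^{-L_F t}\,(\psi\circ\Phi_t)$ again lies in $\mathcal L$; and $t\mapsto T_t\mu$ is Lipschitz in the bounded Lipschitz norm on total-variation-bounded sets, with $\norm{T_h\mu-\mu}\le\norm{F}_\infty\,h\,\norm{\mu}_{\mathrm{TV}}$.

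Next I would establish that $u\in BC([0,T);\mathcal M(\R^d))$ is a solution in the sense of Definition~\ref{dfn:solution-abstract} if and only if it satisfies the mild equation
\[
  u(t)=T_t u_0+\int_0^t T_{t-s}\,N(s,u(s))\,ds\qquad(t\in[0,T)),
\]
the integral of the bounded-Lipschitz-continuous curve $s\mapsto T_{t-s}N(s,u(s))$ being understood by duality against test functions. The implication from the mild equation to the weak formulation is a direct computation: for $\phi\in C_0^\infty([0,T)\times\R^d)$ one has $\frac{d}{dt}\phi(t,\Phi_t x)=(\partial_t\phi+F\cdot\nabla\phi)(t,\Phi_t x)$ (which only uses the $t$-regularity of the flow), and integrating this against $u_0$ and, after Fubini, against the source term reproduces the identity of Definition~\ref{dfn:solution-abstract}. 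The converse implication — needed also for uniqueness of weak solutions — follows by testing the weak formulation against (mollifications of) the backward-transported functions $\psi(\Phi_{\tau-t}(x))$ and passing to the limit, which collapses the transport contribution and leaves a total time derivative. Throughout one uses that $s\mapsto\norm{N(s,u(s))}$ is bounded on compact time intervals (by \eqref{H4} around $N(s,0)$ and continuity \eqref{H3}) and, once $u$ is known to be total-variation-bounded, that $s\mapsto\norm{N(s,u(s))}_{\mathrm{TV}}$ is bounded (by \eqref{H5}); this is precisely why the metric space used below must enforce a total variation bound.

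For local existence and uniqueness, fix $R>\norm{u_0}_{\mathrm{TV}}$, let $C_R$ be the constant from \eqref{H5} and $L_N$ the Lipschitz constant from \eqref{H4} on $[0,1]\times B_{BL}(R)$, and work in
\[
  X_\tau:=\bigl\{\,u\in C([0,\tau];\mathcal M(\R^d)) : u(0)=u_0,\ \norm{u(t)}_{\mathrm{TV}}\le R\ \text{ for all }t\in[0,\tau]\,\bigr\}
\]
with the distance $\sup_{t\in[0,\tau]}\norm{u_1(t)-u_2(t)}$; this is a complete metric space because $B_{\mathrm{TV}}(R)$ is closed in the bounded Lipschitz norm, as recalled before the statement. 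The map $\mathcal T$ given by the right-hand side of the mild equation maps $X_\tau$ into itself once $\tau C_R\le R-\norm{u_0}_{\mathrm{TV}}$ (using that $T_t$ preserves total variation together with \eqref{H5}, and the continuity properties of $T_t$ for the time-continuity of $\mathcal T u$), and it is a contraction once $L_N e^{L_F\tau}\tau<1$, since subtracting the two mild equations and using the properties of $T_t$ with \eqref{H4} gives $\norm{(\mathcal T u_1)(t)-(\mathcal T u_2)(t)}\le\int_0^t e^{L_F(t-s)}L_N\norm{u_1(s)-u_2(s)}\,ds$. Banach's theorem then yields a unique solution on a short interval. The very same inequality, applied now to two arbitrary solutions with bounded Lipschitz norms $\le R$ and combined with Gronwall's lemma applied to $e^{-L_F t}\norm{u_1(t)-u_2(t)}$, gives $\norm{u_1(t)-u_2(t)}\le e^{(L_F+L_N)t}\norm{u_1^0-u_2^0}$, which is at once the continuous-dependence estimate and (for $u_1^0=u_2^0$) global uniqueness. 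Gluing the local solutions defines the maximal existence time $T$; the blow-up alternative holds because the length of the local existence interval depends on $u_0$ only through $\norm{u_0}_{\mathrm{TV}}$, so if $\norm{u(t)}_{\mathrm{TV}}$ stayed bounded along a sequence $t_n\to T<+\infty$ one could restart the construction at $t_n$ and continue the solution past $T$.

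I expect the genuine obstacle to be the rigorous equivalence of the weak formulation of Definition~\ref{dfn:solution-abstract} with the mild equation: giving a precise meaning to the measure-valued Duhamel integral, checking that it produces an element of $\mathcal M(\R^d)$ and not merely of the (non-complete) completion of $\mathcal M(\R^d)$ under $\norm{\cdot}$, controlling continuity in $t$ of all the pieces, and — in the direction from weak to mild — handling the limited regularity of the characteristic flow (only Lipschitz in $x$) via a mollification argument. Once this functional setting is in place, the contraction estimate, the Gronwall argument and the continuation/blow-up dichotomy are routine.
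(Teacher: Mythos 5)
Your proposal is correct and follows essentially the same route as the paper: the Duhamel/mild formulation via push-forward along the characteristic flow, a Banach fixed-point argument in the space of bounded-Lipschitz-continuous curves confined to a total variation ball (complete because $B_{\mathrm{TV}}(R)$ is closed in the bounded Lipschitz norm), the contraction estimate $L_N e^{L_F\tau}\tau<1$, Gronwall for the continuous-dependence bound, and the standard continuation argument for the blow-up alternative. The only difference is that you spell out the equivalence between the weak formulation of Definition~\ref{dfn:solution-abstract} and the mild equation, which the paper dispatches with ``it is easy to check''; your added care there is compatible with, not divergent from, the paper's argument.
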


\begin{rem}
  \label{rem:in-omega}
  If there is no drift term present in equation \eqref{eq:abstract}
  (this is, $F = 0$) and $\Omega \subseteq \R^d$ is an open set, then
  the result holds if one changes $\mathcal{M}(\R^d)$ by
  $\mathcal{M}(\Omega)$. The modifications needed in the proof below
  are straightforward and we omit them.
\end{rem}

\begin{proof}[Proof of Theorem {\rm\ref{thm:well-posedness}}]
  Define $X_t: \R^d \to \R^d$ as the flow at time $t$ of the
  characteristic equations
  \begin{equation*}
    \frac{dX}{dt} = F(X).
  \end{equation*}
  By standard arguments in the theory of ordinary differential
  equations we have that $L_{X_t}$, the Lipschitz constant of the flow
  $X_t$ at time $t$, satisfies
  \begin{equation*}
    L_{X_t} \leq e^{L_F t} \qquad (t \geq 0),
  \end{equation*}
  where $L_F$ is the Lipschitz constant of $F$.

  We will prove existence by a fixed point argument in the set
  $$
  \mathcal{M}_{T} := \{ u \in BC([0,T]; B_{\mathrm{TV}}(R)) \mid u(0) = u^0 \},
  $$
  of radius $R := 2 \|u_0\|_{\mathrm{TV}}$. We choose
  \begin{equation}
    \label{eq:def-T}
    T := \min \{ \|u^0\|_{\mathrm{TV}}/C_R, 1 / L_F, 1/(3L_N) \},
  \end{equation}
  where $C_R$ and $L_N$ are given in Hypotheses (H1)-(H5) with
  $K=[0,T) \times B_{BL}(R)$. We endow
  $\mathcal{M}_T$ with the standard norm
  \begin{equation*}
    \tnorm u \tnorm := \sup_{t \in [0,T]} \| u(t) \|\,.
  \end{equation*}
  As noticed before $\mathcal{M}_T$ is a closed subset of the
  space $BC([0,T]; B_{\mathrm{TV}}(R))$. Hence $\mathcal{M}_T$ is complete
  and we may apply the Banach fixed point theorem in it.
  Define the map
  $\Gamma: \mathcal{M}_T \to \mathcal{M}_T$ as
  \begin{equation}
    \label{eq:Gamma}
    \Gamma(u)(t)
    :=
    X_t \# u^0 + \int_0^t X_{t-s} \# N(s,u(s)) \,ds\,,
  \end{equation}
  with $\#$ denoting the push-forward of a measure through a map. It
  is easy to check that a fixed point of this map is in fact a
  solution to equation \eqref{eq:abstract}.

{\bf Step 1: $\Gamma$ is well-defined.-}
  Let us first show that $t \mapsto X_t \# u^0$ and $s \mapsto X_{t-s}
  \# N(s,u(s))$ (for $t \in [0,T]$ fixed) are continuous maps. For the first
  one, taking any test function $\phi \in \mathcal{L}$ and any $t,
  \tau \in [0,T]$,
  \begin{align*}
    \int_{\R^d} \phi \cdot \big(X_t \# u^0 - X_\tau \# u^0\big)\,dx
    &=
    \int_{\R^d} (\phi(X_t(x)) - \phi(X_\tau(x)))u^0(x) \,dx
    \\
    &\leq
    \int_{\R^d} |X_t(x) - X_\tau(x)|\, |u^0|(x) \,dx
    \\
    &\leq
    |t-\tau| \,\|F\|_\infty \|u^0\|_{\mathrm{TV}},
  \end{align*}
  which shows continuity. As for the second one, we take $\phi$ as
  before, fix $t \in [0,T]$ and take any $\tau,s \in [0,t]$. Denoting
  $N(s,u(s))$ as $N_s$ for short, we have
  \begin{multline*}
    \int_{\R^d} \!\phi \cdot \big( X_{t-s} \# N_s
    - X_{t-\tau} \# N_\tau \big)\,dx
    \\
    =
    \int_{\R^d} \phi \cdot \big( X_{t-s} \# N_s
    - X_{t-s} \# N_\tau \big)\,dx
    +
    \int_{\R^d} \phi \cdot \big( X_{t-s} \# N_\tau
    - X_{t-\tau} \# N_\tau \big)\,dx
    \\
    =
    \int_{\R^d} (\phi \circ X_{t-s}) \, (N_s - N_\tau)\,dx
    +
    \int_{\R^d} \!\!\big( \phi(X_{t-s}(x)) - \phi(X_{t-\tau}(x)) \big)
    \,N_\tau(x) \,dx
    \\
    \leq
    L_{X_{t-s}} \|N_s - N_\tau\|
    + |\tau - s| \,\|F\|_{\infty} \|N_\tau(x)\|_{\mathrm{TV}}
    \\
    \leq
    e^{(t-s) L_F} \|N_s - N_\tau\|
    + C_R \|F\|_{\infty} |\tau - s|\,.
  \end{multline*}
  This proves continuity, as $s \mapsto N(s,u(s))$ is continuous due
  to (H3). Hence,
  the integral in \eqref{eq:Gamma} makes sense, $\Gamma(u)$ is continuous
  from $[0,T]$ to $\mathcal{M}(\R^d)$ in the bounded Lipschitz norm,
  and we only need to see that its image is inside $B_{\mathrm{TV}}(R)$:
  \begin{align*}
    \|\Gamma(u)(t)\|_{\mathrm{TV}} &\leq
    \norm{X_t \# u^0}_{\mathrm{TV}}
    +
    \int_0^t \norm{ X_{t-s} \# N(u(s)) }_{\mathrm{TV}} \,ds
    \\
    &\leq
    \norm{u^0}_{\mathrm{TV}}
    +
    \int_0^t \norm{N(u(s)) }_{\mathrm{TV}} \,ds
    \\
    &\leq
    \norm{u^0}_{\mathrm{TV}}
    +
    C_R T
    \leq
    2 \norm{u^0}_{\mathrm{TV}} = R\,.
  \end{align*}

{\bf Step 2: $\Gamma$ is contractive.-}
  Take $u,v \in \mathcal{M}_T$. Using similar arguments we estimate
  \begin{align*}
    \|\Gamma(u)(t) - \Gamma(v)(t)\|
    &\leq
    \int_0^t \| X_{t-s} \# N(s,u(s)) - X_{t-s} \# N(s,v(s)) \| \,ds
    \\
    &\leq
    \int_0^t L_{X_{t-s}} \| N(s,u(s)) - N(s,v(s)) \| \,ds
    \\
    &\leq
    e^{L_F T} L_N
    \int_0^t  \| u(s) - v(s) \| \,ds.
  \end{align*}
  By taking the maximum over $t \in [0,T]$ this implies
  \begin{equation*}
    \tnorm \Gamma(u) - \Gamma(v) \tnorm
    \leq
    e^{L_F T} L_N T \, \tnorm u - v \tnorm
    < L \, \tnorm u - v \tnorm,
  \end{equation*}
  for some $L < 1$, due to the choice of $T$ made in
  \eqref{eq:def-T}. An application of the Banach fixed point theorem
  together with usual arguments on the extension of solutions finishes
  the proof of point i) of the theorem.

{\bf Step 3: Continuous dependence.-}
  We estimate the difference of the two solutions as follows:
  \begin{align*}
    \|u(t) &- v(t)\|
    \\
    &\leq
    \|X_t \# u^0 - X_t \# v^0\|
    + \int_0^t
    \| X_{t-s} \# N(s,u(s)) - X_{t-s} \# N(s,v(s)) \| \,ds
    \\
    &\leq
    L_{X_{t}} \|u^0 - v^0\|
    + \int_0^t
    L_{X_{t-s}} \| N(s,u(s)) - N(s,v(s)) \| \,ds
    \\
    &\leq
    e^{L_F t} \|u^0 - v^0\|
    + L_N \int_0^t
    e^{L_F (t-s)} \| u(s) - v(s) \| \,ds.
  \end{align*}
  Gronwall's Lemma then implies the result.
\end{proof}

\begin{rem}
Theorem \ref{thm:well-posedness} is a generalization of ideas in
the theory of linear evolution semigroups \cite{Pazy,Nagel}, since
equation \eqref{eq:abstract} is the sum of a linear term, and a
locally Lipschitz perturbation. However, a small modification of
the argument is needed: this comes from the fact that one cannot
work in the dual space $[W^{1,\infty}(\R^d)]^*$. Actually, the
proof above shows that by restricting to measures in $B_{\mathrm{TV}}(R)$
we are able to prove the continuity of the transport semigroup.
This continuity is not evident in $B_{BL}(R)$.
\end{rem}

Finally, we point out that we are usually interested in positive
measures as initial condition, even if Theorem
\ref{thm:well-posedness} does not require positivity. It is most often
the case in models that the quantity under study is the density of a
given population, which is intrinsically positive. Hence, for most
models of interest, positivity is preserved in time (see for example
Lemma \ref{lem:positivity}).


\section{Application to particular models}
\label{sec:examples}

In this section we will apply Theorem \ref{thm:well-posedness} to show
well-posedness of four particular models in population dynamics. The
first one is a simple selection-mutation equation for a phenotypic
variable inspired by the ``continuum of alleles model'' introduced by
Crow and Kimura (see \cite{crow} and also \cite{burger}) in the field
of population genetics in order to explain the maintenance of genetic
variation due to the balance effect of selection and mutation. The
second one, introduced in \cite{ccdr}, is a modification of the first
one in which it is assumed that the nonlinear term modelling the
competition between individuals for resources is
infinite-dimensional. The third model we consider was studied in
\cite{palmada} and it is a selection-mutation model for an
age-structured population. The last model we present is an age- and
size-structured model that was introduced in \cite{W}.

\subsection{A simple selection-mutation equation}
\label{sec:selection-mutation}

Let us consider the following selection-mutation equation:
\begin{multline}
  \label{eq:spe}
  \frac{\partial u}{\partial t} (t,x)
  = (1-\varepsilon) b(x) u(t,x) - m(x,P(t)) u(t,x)
  \\
  +\varepsilon \int_{\Omega} b(y) \gamma(x,y) u(t,y) \,dy
  := N(u(t,\cdot))(x)
\end{multline}
for the density $u(t,x)$ of individuals at time $t \geq 0$ with
respect to an (abstract) evolutionary variable $x$ in an open set
$\Omega \subseteq \R^d$. $P(t)$ denotes the total population at time
$t$
\begin{equation*}
  P(t) := \int_{\Omega} u(t,x) \,dx
\end{equation*}
and $m$ is the trait-specific death rate which depends in an
increasing way on the total population $P(t)$ at time $t$. The
inflow of non-mutant newborns will be given by $(1-\varepsilon)
b(x) u(t,x)$ where $b(x)$ is the trait-specific fertility and
$\varepsilon$ stands for the probability of mutation. The inflow
of mutant newborns will be given by the integral operator
$\varepsilon \int_{\Omega} b(y) \gamma(x,y) u(t,y) \,dy$ where
$\gamma(x,y)$ is the density of probability that the trait of the
mutant offspring of an individual with trait $y$ is $x$.

We may apply Theorem \ref{thm:well-posedness} to equation
\eqref{eq:spe} under the following conditions:

\begin{thm}
  \label{thm:selection-mutation}
  Assume \eqref{H1} and also that $b$, $m$ and $\gamma$ satisfy the
  following:
  \begin{enumerate}
  \item \label{it:sm-b} $b: \Omega \to \R$ is in $W^{1,\infty}$ (i.e.,
    it is bounded and Lipschitz).
  \item \label{it:sm-m}
    $m: \Omega \times \R \to \R$ satisfies that for each $p \in
    \R$, $m(\cdot, p) \in W^{1,\infty}$, and for each $R > 0$ there
    exists $L_m > 0$ such that
    \begin{equation}
      \label{eq:hyp-m2}
      \norm{m(\cdot, p_1) - m(\cdot, p_2)}_{1,\infty}
      \leq
      L_m |p_1 - p_2|
      \quad \text{ for all } p_1, p_2 \in [-R,R].
    \end{equation}
  \item \label{it:sm-gamma}
    For each $y \in \Omega$, $\gamma(\cdot, y)$ is a positive
    probability measure on $\Omega$ and
    there exists $L_\gamma > 0$ such that
    \begin{equation}
      \label{eq:hyp-gamma-Lip}
      \|\gamma(\cdot,y) - \gamma(\cdot,z)\|
      \leq
      L_\gamma |y-z|
      \quad \text{ for all } y,z \in \Omega.
    \end{equation}
  \end{enumerate}
  Then the operator $N$ in equation \eqref{eq:spe} satisfies the
  hypotheses of Theorem {\rm\ref{thm:well-posedness}}. Consequently,
  equation \eqref{eq:spe} is well-posed in the sense of Theorem
  {\rm\ref{thm:well-posedness}}.
\end{thm}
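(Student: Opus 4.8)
The plan is to verify that the nonlinear operator $N$ defined in \eqref{eq:spe} satisfies Hypotheses \eqref{H3}, \eqref{H4} and \eqref{H5}, and then to apply Theorem~\ref{thm:well-posedness}. Since there is no transport term here ($F=0$) and the state space is the open set $\Omega$, I would use Remark~\ref{rem:in-omega} to work in $\mathcal{M}(\Omega)$ rather than $\mathcal{M}(\R^d)$; then \eqref{H2} is vacuous and \eqref{H1} holds by assumption. I would split $N(\mu) = (1-\varepsilon)\,b\,\mu - m(\cdot,P(\mu))\,\mu + \varepsilon\,T\mu$, where $P(\mu) := \int_\Omega d\mu$ and $T\mu$ is the measure determined by $\int_\Omega \psi\, d(T\mu) = \int_\Omega b(y)\big(\int_\Omega \psi\, d\gamma(\cdot,y)\big)\, d\mu(y)$, and handle the three terms in turn. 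A useful preliminary remark is that the constant function $1$ lies in $\mathcal{L}$, so $\mu \mapsto P(\mu)$ is $1$-Lipschitz from $(\mathcal{M}(\Omega),\|\cdot\|)$ into $\R$, with $|P(\mu)| \le \|\mu\| \le \|\mu\|_{\mathrm{TV}}$.

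The linear birth term is immediate from Lemma~\ref{lem:product-bound} together with \eqref{it:sm-b}: $b\mu \in \mathcal{M}(\Omega)$ with $\|b\mu\| \le \|b\|_{1,\infty}\|\mu\|$ and $\|b\mu\|_{\mathrm{TV}} \le \|b\|_\infty\|\mu\|_{\mathrm{TV}}$, so $\mu \mapsto (1-\varepsilon)b\mu$ is bounded and linear. For the death term I would use the splitting
\begin{multline*}
m(\cdot,P(\mu_1))\,\mu_1 - m(\cdot,P(\mu_2))\,\mu_2 \\ = m(\cdot,P(\mu_1))\,(\mu_1-\mu_2) + \big(m(\cdot,P(\mu_1)) - m(\cdot,P(\mu_2))\big)\,\mu_2
\end{multline*}
and estimate each summand with Lemma~\ref{lem:product-bound}: on a set of bounded Lipschitz radius $R$ one has $|P(\mu_i)| \le R$, so \eqref{eq:hyp-m2} bounds $\|m(\cdot,P(\mu_1))\|_{1,\infty}$ (expanding around $p=0$) and also gives $\|m(\cdot,P(\mu_1)) - m(\cdot,P(\mu_2))\|_{1,\infty} \le L_m\,|P(\mu_1)-P(\mu_2)| \le L_m\|\mu_1-\mu_2\|$; combined with $\|\mu_2\| \le R$ and the $1$-Lipschitz continuity of $P$ this yields the local Lipschitz bound required in \eqref{H4}. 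The corresponding total variation estimate, $\|m(\cdot,P(\mu))\,\mu\|_{\mathrm{TV}} \le (\|m(\cdot,0)\|_\infty + L_m R)\,\|\mu\|_{\mathrm{TV}}$ for $\|\mu\|_{\mathrm{TV}} \le R$, takes care of \eqref{H5} for this term.

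The delicate term --- and the step I expect to be the main obstacle --- is the mutation operator $T$, where hypothesis \eqref{it:sm-gamma} is indispensable. For $\psi \in \mathcal{L}$ I would examine $g_\psi(y) := b(y)\int_\Omega \psi\, d\gamma(\cdot,y)$. Since each $\gamma(\cdot,y)$ is a probability measure, $\big|\int_\Omega \psi\, d\gamma(\cdot,y)\big| \le \|\psi\|_\infty \le 1$, and by \eqref{eq:hyp-gamma-Lip} and the duality inequality, $\big|\int_\Omega \psi\, d\gamma(\cdot,y) - \int_\Omega \psi\, d\gamma(\cdot,z)\big| \le \|\psi\|_{1,\infty}\,L_\gamma\,|y-z| \le L_\gamma\,|y-z|$; the product rule for Lipschitz constants then shows $g_\psi$ is bounded and Lipschitz, hence $g_\psi \in W^{1,\infty}(\Omega)$ with $\|g_\psi\|_{1,\infty} \le \|b\|_{1,\infty} + L_\gamma\|b\|_\infty$, a bound independent of $\psi$. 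Consequently $\big|\int_\Omega \psi\, d(T\mu)\big| = \big|\int_\Omega g_\psi\, d\mu\big| \le (\|b\|_{1,\infty} + L_\gamma\|b\|_\infty)\,\|\mu\|$, which shows $T\mu \in \mathcal{M}(\Omega)$ and that $T$ is a bounded linear operator, hence globally Lipschitz; the bound $\|T\mu\|_{\mathrm{TV}} \le \|b\|_\infty\|\mu\|_{\mathrm{TV}}$ follows straight from the definition. Collecting the three pieces gives \eqref{H4}, from which \eqref{H3} follows automatically since $N$ carries no explicit $t$-dependence and local Lipschitz continuity implies continuity, and summing the three total variation bounds gives \eqref{H5}. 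The crux is the observation that pairing $T\mu$ with a unit element of $W^{1,\infty}(\Omega)$ and integrating out the mutation kernel returns a function still lying in a fixed multiple of the $W^{1,\infty}(\Omega)$-ball; this regularization genuinely requires the bounded Lipschitz continuity of $y \mapsto \gamma(\cdot,y)$ and not merely its measurability.
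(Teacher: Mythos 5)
Your proposal is correct and follows essentially the same route as the paper: reduce to checking \eqref{H4}--\eqref{H5} (with \eqref{H3} automatic since $N$ is autonomous and \eqref{H2} vacuous via Remark~\ref{rem:in-omega}), treat the birth and death terms with Lemma~\ref{lem:product-bound} together with the $1$-Lipschitz continuity of $P$ and \eqref{eq:hyp-m2}, and handle the mutation term by the duality estimate showing that $y \mapsto \int \psi\, d\gamma(\cdot,y)$ is uniformly bounded and $L_\gamma$-Lipschitz for $\psi \in \mathcal{L}$. Your only cosmetic difference is absorbing the factor $b$ into the test-function pairing $g_\psi$ rather than pulling out $\norm{b}_{1,\infty}$ separately as in \eqref{eq:3rd-term}, which changes nothing of substance.
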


\begin{proof}
  We need to check that assumptions \eqref{H4} and \eqref{H5} are
  satisfied, as \eqref{H1} is included in the statement, \eqref{H2} is
  trivial here since $F=0$, and \eqref{H3} is a consequence of \eqref{H4}
  since the operator $N$ in equation \eqref{eq:spe} does not depend on
  time. We point out that due to Remark \ref{rem:in-omega} we may
  use Theorem \ref{thm:well-posedness} in $\mathcal{M}(\Omega)$, as we
  have no drift term here ($F = 0$).

  For \eqref{H4} we need to prove that given $R > 0$ there exists a
  constant $L > 0$ such that
  \begin{equation*}
    \|N(\mu) - N(\nu)\| \leq L \|\mu - \nu\|
  \end{equation*}
  for all $\mu, \nu \in \mathcal{M}$ with $\|\mu\|, \|\nu\| \leq
  R$. For the first term in \eqref{eq:spe},
  \begin{equation}
    \label{eq:t1}
    \|b \mu - b \nu\| = \|b(\nu-\mu)\|
    \leq \|b\|_{1,\infty} \|\nu - \mu\|.
  \end{equation}
  For the second,
  \begin{multline}
    \label{eq:t2}
    \| m(\cdot,P(\mu)) \mu - m(\cdot,P(\nu)) \nu\|
    \\
    \leq
    \| \big(m(\cdot,P(\mu)) - m(\cdot,P(\nu))\big) \mu\|
    +
    \| m(\cdot,P(\nu)) (\mu-\nu)\|
    \\
    \leq
    \| m(\cdot,P(\mu)) - m(\cdot,P(\nu)) \|_{1,\infty} \| \mu\|
    +
    \| m(\cdot,P(\nu))\|_{1,\infty} \|\mu-\nu\|
    \\
    \leq
    L_m | P(\mu) - P(\nu) | \| \mu\|
    +
    C \|\mu-\nu\|
    \leq
    \| \mu - \nu \| (C + L_m \| \mu\|),
  \end{multline}
  where $C$ is a constant such that $\| m(\cdot, p) \|_{1,\infty} \leq
  C$ for all $p \in [-R,R]$ finite due to \eqref{eq:hyp-m2}. Finally,
  in order to estimate the third term we notice that, for all $\psi
  \in \mathcal{L}$,
  \begin{equation}
    \label{eq:hyp-gamma-consequence}
    \Big\| \int \gamma(x,\cdot) \psi(x) \,dx \Big\|_{1,\infty}
    \leq
    C,
  \end{equation}
  for some $C > 0$. Indeed, $\int \gamma(x,y) \psi(x) \,dx$ is
  uniformly bounded for $y \in \Omega$ due to the fact that
  $\gamma(\cdot,y)$ is a probability measure, and it is also Lipschitz
  in $y$ since
  \begin{equation*}
    \left| \int (\gamma(x,y) - \gamma(x,z)) \psi(x) \,dx \right|
    \leq
    \|\gamma(\cdot,y) - \gamma(\cdot,z)\|
    \leq L_\gamma |y-z|
  \end{equation*}
  for all $y,z \in \Omega$, due to \eqref{eq:hyp-gamma-Lip}. Hence,
  \eqref{eq:hyp-gamma-consequence} holds and we can estimate the third
  term in~\eqref{eq:spe} by integrating against a function $\psi \in
  \mathcal{L}$:
  \begin{align}
    \label{eq:3rd-term}
    &\left| \int \int b(y) \gamma(x,y)
      (\mu(y) - \nu(y)) \psi(x) \,dy \,dx \right|
    \\
    &\qquad=
    \abs{ \int (\mu(y) - \nu(y)) b(y)
      \int \gamma(x,y) \psi(x) \,dx
      \,dy }
    \nonumber\\
    &\qquad\leq
    \norm{\mu - \nu}
    \norm{b}_{1,\infty}
    \norm{\int \gamma(x,\cdot) \psi(x) \,dx}_{1,\infty}
    \leq
    C \norm{\mu - \nu}
    \norm{b}_{1,\infty}.\nonumber
  \end{align}
Putting together \eqref{eq:t1}, \eqref{eq:t2}, and
\eqref{eq:3rd-term} we conclude that \eqref{H4} holds.
  Finally, \eqref{H5} is easily seen to hold using that $b$ is bounded
  and $\gamma(\cdot,y)$ is a probability measure.
\end{proof}

In the general abstract theorem we do not show conservation of
positivity for solutions. Since $L^1(\Omega)$ is dense in
$\mathcal{M}(\Omega)$ in the bounded Lipschitz distance, this is a
straightforward consequence of the result of conservation of
positivity of $L^1$ solutions, which is already available for all of
the models mentioned here. We show positivity of solutions for this
model for the sake of completeness, but this will be skipped for the
rest of the models of the paper, to which analogous arguments are
applicable.

\begin{lem}
  \label{lem:positivity}
  Under the hypotheses of Theorem \ref{thm:selection-mutation}, every
  solution of \eqref{eq:spe} with positive initial condition $u_0$ is
  positive.
\end{lem}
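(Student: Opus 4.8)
The plan is to absorb the (possibly sign‑indefinite) zeroth‑order terms of \eqref{eq:spe} into a large damping constant, so that the resulting reaction operator preserves positivity, and then to run the Picard iteration of Theorem~\ref{thm:well-posedness} keeping every iterate nonnegative. Fix $T' < T$ and set $R^* := \sup_{t \in [0,T']}\|u(t)\|_{\mathrm{TV}}$, which is finite since $T' < T$; put $R := 2R^*$. Since $b \in W^{1,\infty}$ and, by the hypotheses on $m$, $\sup_{|p|\le R}\|m(\cdot,p)\|_\infty < \infty$, we may choose $\lambda > 0$ with
\[
  \lambda + (1-\varepsilon)\,b(x) - m(x,p) \ge 0 \qquad (x \in \Omega,\ |p| \le R).
\]
Writing $\tilde N(v) := N(v) + \lambda v$, so that \eqref{eq:spe} becomes $\partial_t u + \lambda u = \tilde N(u)$, one checks that whenever $v \in \mathcal{M}(\Omega)$ is nonnegative with $\|v\|_{\mathrm{TV}} \le R$ (hence $|P(v)| \le R$), the measure $\tilde N(v)$ is again nonnegative: the factor $\lambda + (1-\varepsilon)b - m(\cdot,P(v))$ multiplying $v$ is nonnegative, and the mutation term $\varepsilon \int_\Omega b(y)\,\gamma(\cdot,y)\,dv(y)$ is nonnegative because $b \ge 0$ (the fertility), $\gamma(\cdot,y)$ is a positive measure, and $v \ge 0$.

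Next I would reformulate the solution by variation of constants: $u$ is, equivalently, the unique fixed point of
\[
  \tilde\Gamma(v)(t) := e^{-\lambda t}\, u_0 + \int_0^t e^{-\lambda(t-s)}\, \tilde N(v(s))\, ds .
\]
That $\tilde\Gamma$ and the map $\Gamma$ of Theorem~\ref{thm:well-posedness} (here $F=0$, so $X_t = \mathrm{id}$) share their fixed points is the routine computation $\partial_t v = -\lambda v + \tilde N(v) = N(v)$; uniqueness then identifies the fixed point with $u$. Exactly as in Step~2 of the proof of Theorem~\ref{thm:well-posedness}, on an interval $[t_0,t_0+\tau]$ with $\tau = \tau(R) > 0$ small enough — depending only on $R$, on $C_R + \lambda R$, and on the local Lipschitz constant $L_N + \lambda$ of $\tilde N$ on $B_{BL}(R)$ — the map $\tilde\Gamma$ is a contraction on
\[
  \mathcal{M}^+ := \bigl\{ v \in BC([t_0,t_0+\tau]; B_{\mathrm{TV}}(R)) : v(t_0) = u(t_0),\ v(s) \ge 0\ \forall s \bigr\},
\]
provided $\|u(t_0)\|_{\mathrm{TV}} \le R/2$: the bound $\|\tilde\Gamma(v)(t)\|_{\mathrm{TV}} \le \|u(t_0)\|_{\mathrm{TV}} + (C_R + \lambda R)(t-t_0)$ keeps the iterates inside $B_{\mathrm{TV}}(R)$, so the choice of $\lambda$ stays valid along the whole iteration, while nonnegativity is preserved because $e^{-\lambda(t-s)} \ge 0$ and $\tilde N(v(s)) \ge 0$. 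Starting the iteration from the constant curve $u(t_0)$ thus yields nonnegative iterates converging in the bounded Lipschitz norm, uniformly on $[t_0,t_0+\tau]$, to $u$ (by uniqueness, the fixed point on $[t_0,t_0+\tau]$ is the restriction of $u$); since the cone of nonnegative Radon measures is closed for the bounded Lipschitz distance (test against nonnegative Lipschitz functions), $u(t) \ge 0$ on $[t_0,t_0+\tau]$.

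Finally I would iterate along a partition of $[0,T']$: the argument applies at $t_0 = 0$ since $u_0 \ge 0$ and $\|u_0\|_{\mathrm{TV}} \le R^* = R/2$; it applies again at $t_0 = \tau$ since $\|u(\tau)\|_{\mathrm{TV}} \le R^* = R/2$, and so on, covering $[0,T']$ in finitely many steps because $\tau$ was chosen independent of $t_0$. Letting $T' \uparrow T$ gives $u(t) \ge 0$ for all $t \in [0,T)$. The delicate points I anticipate are the bookkeeping ones — verifying that the iterates never leave $B_{\mathrm{TV}}(R)$ and that $\tau$ can be taken uniform in $t_0 \in [0,T']$ — together with the (standard) rigorous justification of the variation‑of‑constants identity at the level of measures; the positivity preservation of $\tilde\Gamma$ and the closedness of the positive cone under the flat metric are immediate. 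Alternatively, one may approximate $u_0$ in the bounded Lipschitz distance by nonnegative $L^1(\Omega)$ densities with uniformly bounded total variation, invoke the known preservation of positivity for $L^1$ solutions, and pass to the limit with the continuous‑dependence estimate of Theorem~\ref{thm:well-posedness}.
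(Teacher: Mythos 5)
Your proposal is correct, but it follows a genuinely different route from the paper. The paper first proves positivity for local solutions in $L^1(\Omega)$: it writes the equation as $\partial_t u = Au + f(u)$ with $A$ the (linear) birth-plus-mutation operator generating a positive semigroup and $f(u) = -m(\cdot,P)u$, shifts by $\lambda$ larger than the bound of $m$, and shows each iterate of the variation-of-constants formula for $e^{-\lambda t}T(t)$ is positive; it then transfers positivity to measure solutions via the density of $L^1(\Omega)$ in $\mathcal{M}(\Omega)$ for the bounded Lipschitz distance together with continuous dependence — essentially the ``alternative'' you sketch in your last sentence. Your main argument instead works directly at the level of measures: the same $\lambda$-shift makes $\tilde N(v)=N(v)+\lambda v$ preserve the nonnegative cone on $B_{\mathrm{TV}}(R)$, and you rerun the Picard iteration of Theorem \ref{thm:well-posedness} with the damped Duhamel map $\tilde\Gamma$ on the closed set of nonnegative curves, using that the cone is BL-closed and that $\tau$ can be chosen uniformly in $t_0$ since it depends only on $R=2\sup_{[0,T']}\|u(t)\|_{\mathrm{TV}}$ (finite because the local solutions are built in TV-balls). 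What your route buys is the avoidance of the $L^1$ semigroup machinery and of the approximation-and-limit step, which the paper only sketches; what the paper's route buys is brevity given the available $L^1$ theory and a template for transferring any $L^1$-provable property to measure data. Two bookkeeping remarks: the nonnegativity of the mutation term requires $b\ge 0$, which is not listed among the hypotheses of Theorem \ref{thm:selection-mutation} but is implicit in the modeling (the paper uses it tacitly when asserting $A$ generates a \emph{positive} semigroup), so you are on the same footing there; and the identification of the fixed point of $\tilde\Gamma$ with $u$ does require the routine equivalence of the damped and undamped integral formulations plus uniqueness from Theorem \ref{thm:well-posedness}, exactly as you indicate.
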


\begin{proof}
  We begin by showing positivity of local solutions of
  \eqref{eq:spe} in $L^{1}(\R^d)$. The initial value problem can be
  written as
  \begin{equation}
    \label{ivp}
    \begin{cases}
      &\frac{\partial u}{\partial t}=Au+f(u)
      \\
      &u(0)=u_{0}
    \end{cases}
  \end{equation}
  where
  $$
  Au(t,x) := (1-\varepsilon) b(x) u(t,x)
  +\varepsilon \int_{\Omega} b(y) \gamma(x,y) u(t,y) \,dy,
  $$
  and $f(u)(t,x):= - m(x,P(t)) u(t,x)$. The operator $A$ is the generator
  of a positive semigroup $T(t)$. Let $\lambda$ be a constant bigger
  than the bound of $m$. If we add and subtract $\lambda u$ to
  (\ref{ivp}) we get
  $$
  \frac{\partial u}{\partial t} = \big(A - \lambda I\big) u +
f(u(t)) + \lambda u(t),
$$
The mild solutions of this new initial value problem, and therefore
also those of problem (\ref{ivp}), can be constructed by iterating a
suitable variation of constants formula \cite{Pazy,Nagel}. More
precisely, they are limits of the sequence $(z_{n})_{n \geq 0}$ of
functions defined on $[0, t_{\max})$ for some $t_{\max} > 0$,
recursively defined by the formula
\begin{equation*}
  z_{n+1}(t)
  =
  \widetilde{T}(t)z_0 +
  \int_{0}^{t} \widetilde{T}(t-s) \big( f(z_n(s))
  + \lambda z_n(s)\big) \,ds,
\end{equation*}
where $\widetilde{T}(t)$ is the semigroup generated by the operator
$A-\lambda I$, that is $\widetilde{T}(t) = e^{-\lambda t}T(t)$.

Since $z_0$ is positive, the semigroup $\widetilde{T}(t)$ is positive
and since $\lambda$ is larger than the bound of $m$, we obtain that
$z_1$ is positive. By induction over $n$ we have that $(z_{n})_{n \geq
  0}$ is positive. Finally, since the cone of the positive functions
of $L^{1}$ is closed, we obtain that $z(t)$ is positive.  Positivity
of local solutions implies positivity of global solutions by a
standard connectedness argument. Finally, using Theorem
\ref{thm:well-posedness}, the density of $L^1(\Omega)$ in
$\mathcal{M}(\Omega)$ in the bounded Lipschitz distance gives us
conservation of positivity in the space of measures.
\end{proof}

\subsection{A selection-mutation model with infinite-dimensional
  environment}
Another example of a selection-mutation equation is
\begin{equation}
  \label{selmut}
  \begin{split}
    \frac{\partial u}{\partial t}(t,x)
    = &\,\Big((1-\varepsilon)b(x)-d_{0}(x)
    -\int_{\Omega}d(x,y)u(t,y)\,dy\Big) u(t,x)
    \\
    &+ \varepsilon \int_{\Omega}b(y) \gamma(x,y)u(t,y)\,dy
    =: N(u(t,\cdot))(x).
  \end{split}
\end{equation}
for the density of individuals $u(t,x)$ with respect to an (abstract)
evolutionary trait $x \in \Omega$. The difference with (\ref{eq:spe})
is that here the trait-specific per capita death rate is given by the
sum of the terms $d_{0}(x)$ and $\int_{\Omega}d(x,y)u(t,y)\,dy$. The
latter one models the interaction between individuals through
competition for resources, and is the only nonlinear term in the
equation (whose nonlinearity in this case is infinite dimensional).

This model was presented in \cite{ccdr}, where the authors prove
existence of steady states and also that their asymptotic profile when
the mutation rate $\varepsilon\to 0$ is a Cauchy distribution.
Our well-posedness result in the space of measures for equation
\eqref{selmut} is the following:

\begin{thm}
  \label{thm:smi}
  Assume \eqref{H1}, points (\ref{it:sm-b}) and (\ref{it:sm-gamma}) in
  Theorem {\rm\ref{thm:selection-mutation}}, and also that $d_0$ and
  $d$ are nonnegative functions satisfying
  \begin{equation}
    \label{eq:hyp-d0}
    d_0: \Omega \to \R \text{ is in } W^{1,\infty}(\Omega),
  \end{equation}
  and $d: \Omega \times \Omega \to \R$ with $d \in
  W^{1,\infty}(\Omega; W^{1,\infty}(\Omega))$; that is, there exists
  $L > 0$ such that
  \begin{gather}
    \label{eq:hyp-d-1}
    \norm{d(x,\cdot)}_{W^{1,\infty}(\Omega)} \leq L
    \quad \text{ for all } x \in \Omega.
    \\
    \label{eq:hyp-d-2}
    \norm{d(x, \cdot) - d(z,\cdot)}_{W^{1,\infty}(\Omega)}
    \leq
    L |x - z|
    \quad \text{ for all } x, z \in \Omega.
  \end{gather}
  Then the operator $N$ in equation \eqref{selmut} satisfies the
hypotheses
  of Theorem {\rm\ref{thm:well-posedness}}. Consequently, equation
  \eqref{selmut} is well-posed in the sense of Theorem
  {\rm\ref{thm:well-posedness}}.
\end{thm}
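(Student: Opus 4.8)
The plan is to mirror the proof of Theorem \ref{thm:selection-mutation}. Since the operator $N$ in \eqref{selmut} is independent of $t$, hypothesis \eqref{H3} follows from the local Lipschitz estimate \eqref{H4}; \eqref{H1} is assumed; \eqref{H2} is vacuous because $F=0$; and, there being no drift, Remark \ref{rem:in-omega} allows us to work throughout in $\mathcal{M}(\Omega)$. So only \eqref{H4} and \eqref{H5} require proof. Three of the four terms of $N$ are already covered by what was done before: the linear terms $(1-\varepsilon)b(x)\mu$ and $-d_0(x)\mu$ are bounded by Lemma \ref{lem:product-bound} together with \eqref{it:sm-b} and \eqref{eq:hyp-d0}, exactly as in \eqref{eq:t1}, and the mutation term $\varepsilon\int_\Omega b(y)\gamma(x,y)\,d\mu(y)$ is treated verbatim as in \eqref{eq:hyp-gamma-consequence}--\eqref{eq:3rd-term} via \eqref{it:sm-gamma}. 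The only genuinely new term is the quadratic competition term $Q(\mu)(x):=\big(\int_\Omega d(x,y)\,d\mu(y)\big)\,\mu(x)$.

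The crux is to control the ``environmental field'' $D_\mu(x):=\int_\Omega d(x,y)\,d\mu(y)$ in $W^{1,\infty}(\Omega)$ by the bounded Lipschitz norm of $\mu$. Here I would use duality: \eqref{eq:hyp-d-1} gives $\abs{D_\mu(x)}\le\norm{d(x,\cdot)}_{1,\infty}\norm{\mu}\le L\norm{\mu}$ for every $x\in\Omega$, while for $x,z\in\Omega$ the difference $D_\mu(x)-D_\mu(z)=\int_\Omega(d(x,y)-d(z,y))\,d\mu(y)$ is bounded, again by duality, by $\norm{d(x,\cdot)-d(z,\cdot)}_{1,\infty}\norm{\mu}\le L\abs{x-z}\norm{\mu}$ thanks to \eqref{eq:hyp-d-2}; hence $\norm{D_\mu}_{1,\infty}\le 2L\norm{\mu}$. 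Applying the same two bounds to $\mu-\nu$ and using that $\mu\mapsto D_\mu$ is linear yields $\norm{D_\mu-D_\nu}_{1,\infty}=\norm{D_{\mu-\nu}}_{1,\infty}\le 2L\norm{\mu-\nu}$. This is exactly the role played by the joint regularity assumption $d\in W^{1,\infty}(\Omega;W^{1,\infty}(\Omega))$ of \eqref{eq:hyp-d-1}--\eqref{eq:hyp-d-2}.

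With $D_\mu$ under control, \eqref{H4} follows from the standard bilinear splitting $Q(\mu)-Q(\nu)=D_\mu\,(\mu-\nu)+(D_\mu-D_\nu)\,\nu$: on a set where $\norm{\mu},\norm{\nu}\le R$, Lemma \ref{lem:product-bound} applied to each summand gives $\norm{Q(\mu)-Q(\nu)}\le\norm{D_\mu}_{1,\infty}\norm{\mu-\nu}+\norm{D_\mu-D_\nu}_{1,\infty}\norm{\nu}\le 4LR\,\norm{\mu-\nu}$, and combining this with the linear and mutation contributions gives the full estimate \eqref{H4}. For \eqref{H5}, if $\norm{\mu}_{\mathrm{TV}}\le R$ then $\norm{D_\mu}_\infty\le L\norm{\mu}_{\mathrm{TV}}\le LR$, so $\norm{Q(\mu)}_{\mathrm{TV}}\le\norm{D_\mu}_\infty\norm{\mu}_{\mathrm{TV}}\le LR^2$; the $b$ and $d_0$ terms contribute at most $(\norm{b}_\infty+\norm{d_0}_\infty)R$, and the mutation term is bounded by $\norm{b}_\infty R$ by Fubini together with $\gamma(\cdot,y)$ being a probability measure, as at the end of the proof of Theorem \ref{thm:selection-mutation}. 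Hence $\norm{N(\mu)}_{\mathrm{TV}}\le C_R$ and Theorem \ref{thm:well-posedness} applies.

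The only step I expect to be delicate is the treatment of the quadratic term $Q$: one must verify that $x\mapsto D_\mu(x)$ is not merely bounded but Lipschitz, with \emph{both} its sup norm and its Lipschitz constant controlled by $\norm{\mu}$ in the flat metric (rather than by $\norm{\mu}_{\mathrm{TV}}$), for otherwise Lemma \ref{lem:product-bound} cannot be used to estimate the product $D_\mu\,\mu$. Assumptions \eqref{eq:hyp-d-1}--\eqref{eq:hyp-d-2} are tailored precisely to this; granting them, the rest is a routine repetition of the estimates already carried out for equation \eqref{eq:spe}.
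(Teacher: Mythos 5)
Your proposal is correct and follows essentially the same route as the paper: the same reduction to \eqref{H4}--\eqref{H5}, the same duality bounds showing $x\mapsto\int_\Omega d(x,y)\,d\mu(y)$ lies in $W^{1,\infty}(\Omega)$ with norm controlled by $\norm{\mu}$ via \eqref{eq:hyp-d-1}--\eqref{eq:hyp-d-2}, and the identical bilinear splitting of the competition term estimated with Lemma \ref{lem:product-bound}. The only (immaterial) difference is constant bookkeeping: you track the factor $2L$ in the $\norm{\cdot}_{1,\infty}$ bound, where the paper absorbs it into $L$.
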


\begin{proof}
  As remarked in the proof of Theorem \ref{thm:selection-mutation}, we
  only need to check \eqref{H4} and \eqref{H5}. As the other terms have
  the same form as the terms in \eqref{eq:spe}
  since \eqref{eq:hyp-d0} is satisfied, we only need to check
  \eqref{H4} and \eqref{H5} for the term which involves $d$.

  First we notice that due to (\ref{eq:hyp-d-1})--(\ref{eq:hyp-d-2})
  the term $\int_{\Omega} d(x,y) u(y)\,dy$ is in
  $W^{1,\infty}(\Omega)$ for any $u \in \mathcal{M}(\Omega)$, as for
  all $x \in \Omega$,
  \begin{equation*}
    \left| \int_{\Omega} d(x,y) u(y)\,dy \right|
    \leq
    \|d(x,\cdot)\|_{W^{1,\infty}(\Omega)} \, \|u\|
    \leq
    L \|u\|.
  \end{equation*}
  and for any $x,z \in \Omega$,
  \begin{equation*}
    \left| \int_{\Omega} (d(x,y)-d(z,y)) u(y)\,dy \right|
    \leq
    \|d(x,\cdot) - d(z,\cdot)\|_{W^{1,\infty}(\Omega)} \, \|u\|
    \leq
    L |x-z| \, \|u\|.
  \end{equation*}
  Actually, we have proved that for any $w \in {\mathcal M}(\Omega)$,
  \begin{equation}
    \label{eq:d-prev}
    \norm{ \int_{\Omega} d(\cdot,y) w(y)\,dy }_{W^{1,\infty}(\Omega)}
    \leq
    L \|w\|.
  \end{equation}
  In order to prove \eqref{H4} for the term involving $d$,
  take two measures $u$, $v$ in $\mathcal{M}(\Omega)$. Then,
  \begin{align*}
    &\left\|
      u \int_{\Omega} d(\cdot,y) u(y)\,dy
      - v \int_{\Omega} d(\cdot,y) v(y)\,dy
    \right\|
    \\
    &\qquad\leq
    \left\|
      (u-v) \int_{\Omega} d(\cdot,y) u(y)\,dy
    \right\|
    + \left\|
      v \int_{\Omega} d(\cdot,y) (u(y)-v(y))\,dy
    \right\|
    \\
    &\qquad\leq
    \|u-v\|
    \left\|
      \int_{\Omega} d(\cdot,y) u(y)\,dy
    \right\|_{1,\infty}
    + \|v\|
    \left\|
      \int_{\Omega} d(\cdot,y) (u(y)-v(y))\,dy
    \right\|_{1,\infty}
    \\
    &\qquad\leq
    L \|u-v\| \|u\|
    + L \|v\| \|u-v\|,
  \end{align*}
  where we used (\ref{eq:d-prev}) for the last step. This proves
  \eqref{H4}. On the other hand, \eqref{H5} is easily proved since, in
  particular, $\left| \int_{\Omega} d(x,y) u(y)\,dy \right| \leq L
  \|u\|_{\mathrm{TV}}$ for all $x \in \Omega$.
\end{proof}

\subsection{A selection-mutation model with age structure}
\label{sec:age-structured}

Let us consider the following equation
\begin{subequations}
  \label{eq:spe-age}
  \begin{align}
    \frac{\partial u}{\partial t} (t,a,x)
    +\frac{\partial u}{\partial a} (t,a,x)
    =& - m(a,x,P(u),Q(u)) u(t,a,x)
    \\
    u(t,0,x) =& \,(1- \varepsilon)\int_{x}^{\infty}b(a,x)u(t,a,x)\,da
    \\
    & +\varepsilon \int_{0}^{\infty}\int_{y}^{\infty}
    \gamma(x,y)b(a,y)u(t,a,y) \,da \,dy
    \\
    u(0,a,x) =& \ u_{0}(a,x)
  \end{align}
\end{subequations}
where $u(t,a,x)$ is the density of individuals with age $a \geq 0$ and
maturation age $x \geq 0$ (the evolutionary variable) at time
$t$. $P(u)$ and $Q(u)$ denote, respectively, the total population of
juveniles and adults at time $t$, that is
$P(u)=\int_{0}^{\infty}\int_{0}^{x}u(t,a,x)\,da\,dx$,
$Q(u)=\int_{0}^{\infty}\int_{x}^{\infty}u(t,a,x)\,da\,dx$, $m$ is the
mortality rate, $b$ is the fertility rate and $\gamma(x,y)$ is the
probability density that the maturation age of the mutant offspring of
an individual with maturation age $y$ is $x$. As in the previous
examples, $\varepsilon$ stands for the probability of mutation.

This model is a slightly modified version of the one studied in
\cite{palmada}, where the only difference is in the term of inflow of
newborns. The difference of this model with the ones in the previous
sections is that here, fixing the evolutionary variable, we still have
an infinite-dimensional model, more precisely, an age-structured
population model. In \cite{palmada} well-posedness of the model was
proved in the Banach space $L^{1}(\mathbb{R}^{2}_+)$ (denoting
$\R^2_+=[0,+\infty) \times [0,+\infty)$), and also the existence of
steady states. In order to show well-posedness in the space of
measures we rewrite \eqref{eq:spe-age} as follows:
\begin{subequations}
  \label{eq:spe-age-2}
  \begin{gather}
    \partial_t u + \partial_a u =  N_1(u)
    \\
    u(t,0,x) = n_2(u)
    \\
    u(0,a,x) = u_{0}(a,x)
  \end{gather}
\end{subequations}
where we call $\Omega := \mathbb{R}^{2}_+$ and define, for $u \in
\mathcal{M}(\Omega)$,
\begin{align*}
  N_1(u) := &\,- m(a,x,P(u),Q(u)) u,
  \\
    n_2(u)(x) := &\,(1-\varepsilon) \int_{x}^{\infty}b(a,x) u(a,x) \,da
    \\& + \varepsilon \int_{0}^{\infty}\int_{y}^{\infty}\gamma(x,y)b(a,y)u(t,a,y)\,da\,dy.
\end{align*}
The model may be rewritten in the form (\ref{eq:abstract}) by
extending it to an equation on $\R^2$, with an additional independent
term. Let us be precise about the intended solutions:
\begin{dfn}
  \label{dfn:solution-age}
  Take $T \in [0,+\infty]$. We say a continuous function $u:[0,T) \to
  \mathcal{M}(\Omega)$ is a \emph{solution of equation
  \eqref{eq:spe-age-2} on $[0,T)$ with initial condition $u_0$}
  when, for every $\phi \in \mathcal{C}^\infty_0([0,T) \times
  \Omega)$,
  \begin{multline*}
      - \int_0^T \int_{\Omega} u \partial_t \phi
      \,dx\,da\,dt
      - \int_{\Omega} \phi(0,x,a) u_0(x,a) \,dx\,da
      \\
      - \int_0^T \int_{\Omega}
      u \partial_a \phi \,dx\,da \,dt
      - \int_0^T \int_0^\infty n_2(u(t)) \phi(t,x,0) \,dx \,dt
      \\
      =  \int_0^T \int_{\Omega} N_1(u(t)) \phi \,dx\,da\,dt.
    \end{multline*}
    (When the variables of $\phi$ or $u$ are not specified, it is
    understood that they are $(t,a,x)$).
\end{dfn}

We now take a suitable extension of the functions $m$, $b$ and
$\gamma$ to all of $\R^2$ (for definiteness, by mirror symmetry first
in $x$ and then in $a$) and consider the following equation, posed in
the whole set of $(a,x) \in \R^2$:
\begin{subequations}
  \label{eq:age-rewrite}
  \begin{align}
    &\frac{\partial u}{\partial t} + \frac{\partial u}{\partial a} =
    N_1(u) + n_2(u) \delta_{a=0},
    \\
    &u(0) = u_0.
  \end{align}
\end{subequations}
Equation \eqref{eq:age-rewrite} is of the form
\eqref{eq:abstract}. Now, observe that a solution of
\eqref{eq:age-rewrite}, in the sense of Definition
{\rm\ref{dfn:solution-abstract}}, is also a solution to
\eqref{eq:spe-age-2} in the sense of Definition
{\rm\ref{dfn:solution-age}} when restricted to $\mathbb{R}^{2}_+$,
provided it is zero on the set $\R^2 \setminus \R^2_+$. Hence, we just
need to give conditions on $m$, $b$ and $\gamma$ so that
\eqref{eq:age-rewrite} satisfies Hypotheses \eqref{H1}--\eqref{H5}
\emph{and} its solutions are supported on $\R^2_+$.

\begin{thm}
  \label{thm:age}
  We assume the following:
  \begin{enumerate}
  \item $b \in W^{1,\infty}(\Omega)$, and it is nonnegative.
  \item $m: \Omega \times \R \times \R \to \R$ is a nonnegative
    function satisfying a condition similar to the one in Theorem
    {\rm\ref{thm:selection-mutation}}: for each $p,q \in \R$, $m(\cdot,
    p,q) \in W^{1,\infty}$, and for each $R > 0$ there exists $L_m > 0$
    such that
    \begin{equation*}
      \norm{m(\cdot, p_1,q_1) - m(\cdot, p_2,q_2)}_{1,\infty}
      \leq
      L_m (|p_1 - p_2| + |q_1 - q_2|)
      \end{equation*}
      for all $p_1, p_2, q_1, q_2 \in [-R,R]$.
    \item
    For each $y \in \R$, $\gamma(\cdot, y)$ is a positive
    probability measure on $\R$ and
    there exists $L_\gamma > 0$ such that
    \begin{equation*}
      \|\gamma(\cdot,y) - \gamma(\cdot,z)\|
      \leq
      L_\gamma |y-z|
      \quad \text{ for all } y,z \in \R.
    \end{equation*}
  \end{enumerate}
  Then the initial value problems \eqref{eq:age-rewrite} and
  \eqref{eq:spe-age} are well-posed in the sense of Theorem
  {\rm\ref{thm:well-posedness}}.
\end{thm}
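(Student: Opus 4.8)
The plan is to verify that equation \eqref{eq:age-rewrite} satisfies Hypotheses \eqref{H1}--\eqref{H5} and that, in addition, any solution starting from a measure supported in $\R^2_+$ remains supported in $\R^2_+$; together with the observation made just before the statement, this immediately yields well-posedness of both \eqref{eq:age-rewrite} and \eqref{eq:spe-age} in the sense of Theorem~\ref{thm:well-posedness}. Here $F(a,x) = (1,0)$ is the transport field in the age direction, which is trivially bounded and Lipschitz, so \eqref{H2} holds; \eqref{H1} is part of the hypotheses. The operator is $N(t,u) = N_1(u) + n_2(u)\,\delta_{a=0}$, and it is independent of $t$, so \eqref{H3} will follow once \eqref{H4} is checked.

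For \eqref{H4} and \eqref{H5} I would treat the two pieces of $N$ separately. The term $N_1(u) = -m(a,x,P(u),Q(u))\,u$ has exactly the structure already handled in the proof of Theorem~\ref{thm:selection-mutation} for the term $m(\cdot,P)u$, the only new feature being that $m$ now depends on two scalar moments $P(u),Q(u)$ instead of one; since $P$ and $Q$ are integrals of $u$ against bounded functions, they are Lipschitz (in the TV norm) functions of $u$, and the joint Lipschitz assumption on $m$ in $(p,q)$ lets the same splitting $m(\cdot,P(u),Q(u))u - m(\cdot,P(v),Q(v))v = (m(\cdot,P(u),Q(u))-m(\cdot,P(v),Q(v)))u + m(\cdot,P(v),Q(v))(u-v)$ go through using Lemma~\ref{lem:product-bound}; boundedness of $m$ on TV-balls gives \eqref{H5} for this term. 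For the boundary term $n_2(u)\,\delta_{a=0}$, note first that $\|\delta_{a=0}\,g\| \le \|g\|_{W^{1,\infty}(\R_x)}$ for a function $g=g(x)$ (it is a one-dimensional measure living on the line $a=0$, so the bound follows from testing against $\psi\in\mathcal L$ and restricting $\psi$ to that line), so it suffices to show $u \mapsto n_2(u)$ is locally Lipschitz from $\mathcal M(\Omega)$ into $W^{1,\infty}(\R_x)$. The first part of $n_2$, namely $(1-\varepsilon)\int_x^\infty b(a,x)u(a,x)\,da$, is linear in $u$ and maps into $W^{1,\infty}$ because $b\in W^{1,\infty}(\Omega)$ (the only subtlety is the $x$-dependent lower limit of integration, which contributes a boundary term $-b(x,x)u(x,x)$ that is itself controlled since $b$ is bounded Lipschitz); the mutation part has the same structure as the $\gamma$-term in Theorem~\ref{thm:selection-mutation}, and the estimate \eqref{eq:hyp-gamma-consequence} there applies verbatim once one checks $\|\int\gamma(x,\cdot)\phi(x)\,dx\|_{1,\infty}\le C$ using that $\gamma(\cdot,y)$ is a probability measure, Lipschitz in $y$. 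Boundedness of $b$ and the probability normalization of $\gamma$ give \eqref{H5} for $n_2$ as well.

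The part requiring the most care is the invariance of $\R^2_+$, i.e. that a solution with $\mathrm{supp}\,u_0\subseteq\R^2_+$ stays supported there, so that its restriction genuinely solves \eqref{eq:spe-age-2}. Here I would argue via the fixed-point representation \eqref{eq:Gamma}: the flow $X_t$ of $\dot a=1,\ \dot x=0$ is $X_t(a,x)=(a+t,x)$, which pushes $\R^2_+$ into itself; the measure $n_2(u(s))\,\delta_{a=0}$ is supported on $\{a=0,\ x\ge 0\}\subseteq\R^2_+$ provided $n_2(u(s))$ is supported in $x\ge 0$, which it is since $\mathrm{supp}\,u(s)\subseteq\R^2_+$ forces the $a$-integrals defining $n_2$ to vanish for $x<0$ (using that $b,\gamma$ were extended by mirror symmetry, so $\gamma(\cdot,y)$ remains a probability measure — but one must check that the extension does not create positive mass at negative $x$; this is where the mirror-symmetry choice in $x$ matters, and I would verify that with this extension $n_2(u)(x)$ for $x<0$ equals $n_2(u)(-x)$ or is otherwise controlled, and argue instead that the relevant quantity is that solutions restricted to $\R^2_+$ are unaffected). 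More cleanly: since the Picard iterates $\Gamma^n(u_0)$ all have support in $\R^2_+$ when $u_0$ does — each iterate being a push-forward under $X_{t-s}$ of things supported in $\R^2_+$, integrated — and support is closed under bounded-Lipschitz limits of measures with uniformly bounded TV norm, the unique fixed point is supported in $\R^2_+$; the subtle point is confirming the mirror-symmetric extensions keep $N_1,n_2$ from injecting mass outside $\R^2_+$, which I would dispatch by noting $N_1(u)$ has the same support as $u$ (it is a multiplication operator) and $n_2$ lands on $\{a=0\}$. Continuous dependence and the blow-up alternative then come for free from Theorem~\ref{thm:well-posedness}.
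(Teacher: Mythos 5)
Your overall architecture --- check \eqref{H1}--\eqref{H5} for the extended problem \eqref{eq:age-rewrite} term by term, then show that the support stays in $\R^2_+$ so that restriction solves \eqref{eq:spe-age-2} --- is the same as the paper's, but two of your intermediate claims do not hold as stated. First, reducing the boundary term to showing that $u \mapsto n_2(u)$ is locally Lipschitz from $\mathcal{M}(\Omega)$ into $W^{1,\infty}(\R_x)$ cannot work: $n_2(u)$ is in general a \emph{measure}, not a Lipschitz function, since $\gamma(\cdot,y)$ is only assumed to be a probability measure and $u$ itself is a measure (for $u$ a Dirac mass the fertility part of $n_2(u)$ is an atom). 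Moreover the inequality $\|g\,\delta_{a=0}\| \leq \|g\|_{W^{1,\infty}(\R_x)}$ is false in general: a function equal to $1$ on a long interval has $\|g\|_{1,\infty}$ of order one while $\|g\,\delta_{a=0}\|$ is of the order of the length of the interval; and the ``boundary term $-b(x,x)u(x,x)$'' from the moving lower limit is meaningless when $u$ is a measure. The correct (and simpler) reduction, which is what the paper does, is the identity $\|(n_2(u)-n_2(v))\,\delta_{a=0}\|_{\mathcal{M}(\R^2)} = \|n_2(u)-n_2(v)\|_{\mathcal{M}(\R)}$ for measures on $\R$, followed by an estimate of the latter in the bounded Lipschitz norm by testing against $\phi \in \mathcal{L}$; your own sub-estimates (the bound $\|\phi\, b\|_{1,\infty}\leq \|b\|_{1,\infty}$ and the analogue of \eqref{eq:hyp-gamma-consequence}) deliver exactly that, so the detour through $W^{1,\infty}(\R_x)$ is both unnecessary and unsound. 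Second, for $N_1(u) = -m(a,x,P(u),Q(u))u$ you assert only that $P,Q$ are Lipschitz ``in the TV norm''. Hypothesis \eqref{H4} is a Lipschitz condition in the \emph{bounded Lipschitz} norm, and since $\|u-v\| \leq \|u-v\|_{\mathrm{TV}}$, TV--Lipschitz continuity of $P,Q$ does not yield the bound $|P(u)-P(v)|+|Q(u)-Q(v)| \leq C\|u-v\|$ that your splitting (the analogue of \eqref{eq:t2}) requires; the paper's intended treatment handles $P,Q$ as in Theorem \ref{thm:selection-mutation}, by duality against bounded Lipschitz weights, so that Lemma \ref{lem:product-bound} applies. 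As written, your step does not establish \eqref{H4} for $N_1$.

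On the support question your route genuinely differs from the paper's: you propagate the constraint through the Picard iterates of \eqref{eq:Gamma} and pass to the limit, whereas the paper argues by duality, transporting a test function compactly supported in $\R^2\setminus\R^2_+$ along the adjoint transport equation and concluding $\int \phi_T\, u(T) = 0$. Your method is viable --- measures in $B_{\mathrm{TV}}(R)$ supported in a fixed closed set form a set closed in the bounded Lipschitz norm, and $X_t(a,x)=(a+t,x)$ maps $\R^2_+$ into itself --- but it hinges precisely on the point you flag and then leave open: that $n_2(u(s))$ carries no mass at $x<0$. This must be settled, not deferred; the clean fix is to choose the extension of $\gamma$ so that $\gamma(\cdot,y)$ remains supported in $[0,+\infty)$ for every $y$, and to note that $N_1$ is a multiplication operator and the flow acts only on $a$, so the half-plane $\{x\geq 0\}$ is preserved. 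With those two estimates repaired in the bounded Lipschitz framework and the extension of $\gamma$ fixed as above, your fixed-point argument closes and the conclusion follows from Theorem \ref{thm:well-posedness} as you indicate.
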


\begin{proof}
  As remarked in the proof of Theorem \ref{thm:selection-mutation}, we
  only need to prove \eqref{H4} and \eqref{H5}, and we can do it
  separately for each term.

  The term $N_1(u)$ can be treated in a similar way to the term
  $m(x,P) u$ in \eqref{eq:spe}, and we omit the details. For the
  term $n_2(u) \delta_{a=0}$ we have
  \begin{equation*}
    \norm{(n_2(u) - n_2(v)) \delta_{a=0}}_{\mathcal{M}(\R^2)}
    = \norm{n_2(u) - n_2(v)}_{\mathcal{M}(\R)},
  \end{equation*}
  where the norm on $\mathcal{M}(\R)$ and $\mathcal{M}(\R^2)$ is the
  bounded Lipschitz norm. The term in $n_2(u)$ which involves $\gamma$
  is of a similar form to the one in \eqref{eq:spe} and can be treated
  analogously.  For the other term, taking any test function $\phi \in
  W^{1,\infty}(\R)$ with $\|\phi\|_{1,\infty} \leq 1$,
  \begin{multline*}
    \int_\R \int_\R \phi(x) b(a,x) (u(a,x) - v(a,x)) \,da \,dx
    \\
    \leq
    \|u-v\| \, \|\phi\, b \|_{1,\infty}
    \leq
    \|u-v\| \, \| b \|_{1,\infty},
  \end{multline*}
  which shows that
  \begin{equation*}
    \norm{
      \int_\R b(a,\cdot) (u(a,\cdot) - v(a,\cdot)) \,da
    } \leq \|u-v\| \, \| b \|_{1,\infty},
  \end{equation*}
  hence proving \eqref{H4} for this term. Condition \eqref{H5} for this
  term is easily seen to hold by using that $b$ is bounded.

  The above allows us to apply Theorem \ref{thm:well-posedness} to
  equation (\ref{eq:age-rewrite}). We deduce that the problem
  (\ref{eq:age-rewrite}) is well-posed, and we only have to show that
  its solutions have support on $\R^2_+$, so that they are also
  solutions to (\ref{eq:spe-age}). In order to do this, we take any
  time $T > 0$ and any nonnegative test function
  $\phi_T \in {\mathcal C}^\infty(\R^2)$ with compact support on
  $\R^2 \setminus \R^2_+$ and consider
  $\phi:[0,T]\times \R \times \R \to [0,+\infty)$ to be the solution
  to
  \begin{equation}
    \label{eq:phi}
    \frac{\partial \phi}{\partial t}
    + \frac{\partial \phi}{\partial a}
    = 0
    \quad \text{ on } (0,T) \times \R \times \R,
  \end{equation}
  with $\phi(T,a,x) = \phi_T(a,x)$ for $a,x \in \R$. (That is,
  $\phi(t,a,x) = \phi_T(a + T-t,x)$ for $t \in (0,T]$, $a,x \in \R$.)
  Then, noticing that
  \begin{equation*}
    \partial_t |u| + \partial_a |u| = N_1(u) \operatorname{sign}(u) +
    n_2(u) \delta_{a=0} \operatorname{sign}(u)
  \end{equation*}
  we have
  \begin{multline}
    \label{eq:1}
    \frac{d}{dt} \int_\R \int_\R \phi |u| \,da \,dx
    =
    -
    \int_\R \int_\R |u| \partial_a \phi \,da \,dx
    \\
    +
    \int_\R \int_\R \phi \big( -\partial_a |u| + (N_1(u)
    + n_2(u) \delta_{a=0} ) \operatorname{sign}(u) \big) \,da
    \,dx
    \\
    =
    \int_\R \int_\R \phi N_1(u) \operatorname{sign}(u) \,da \,dx
    \leq
    C \int_\R \int_\R \phi |u| \,da \,dx
  \end{multline}
  for some $C > 0$. Here we have used that for all $t \in [0,T]$,
  $\phi(t,\cdot,\cdot)$ has support contained in
  $\R^2 \setminus \R^2_+$, so that $\phi\, \delta_{a=0} = 0$. For the
  last inequality we used that both $P(u)$ and $Q(u)$ are bounded on
  $[0,T]$ (since the solution $u$ in bounded in $\mathcal{M}(\R^d)$),
  giving
  \begin{equation*}
    N_1(u) \operatorname{sign}(u)
    = m(a,x,P(u),Q(u)) |u| \leq C |u|
  \end{equation*}
  for some number $C > 0$ (due to condition (ii) in Theorem
  \ref{thm:age}). From \eqref{eq:1} we deduce that
  \begin{equation*}
    \int_\R \int_\R \phi |u| \,da \,dx
    = 0 \quad \text{ for $t \in [0,T]$},
  \end{equation*}
  since it is $0$ at $t=0$. In particular,
  $$
  \int_\R \int_\R \phi_T(a,x) |u(T,a,x)| \,da \,dx = 0
  $$
  and since $\phi_T$ was arbitrary we deduce that at time $T$, $u$ has
  support contained in $\R^2_+$.
\end{proof}

\subsection{An age-size structured model}

Let us consider the following age-size structured model from
\cite{W}:
\begin{subequations}
  \label{eq:age-size}
  \begin{align}
    \frac{\partial u}{\partial t} + \,\frac{\partial
      u}{\partial a}+ \frac{\partial }{\partial x} (g(x)u)
    & =  - m(a,x,P(t)) u
    \\
    & \text{ for } a \in (0,a_{1}), x \in(x_{0},x_{1}), \ t>0
    \\
    u(t,0,x) &=
    \int_{0}^{a_{1}} \int_{x_{0}}^{x_{1}}
    \beta(a,\hat{x},x)u(t,a,\hat{x})
    \,d\hat{x}\,da
    \\
    &\text{ for } x \in(x_{0},x_{1}), \ t>0
    \\
    u(0,a,x) &= \, u_{0}(a,x)
    \quad \text{for } a \in (0,a_{1}), \ x \in (x_{0},x_{1}),
  \end{align}
\end{subequations}
where $u=u(t,a,x)$ denotes the density of individuals with age $a$,
with $0 \leq a \leq a_{1} \leq \infty$ and size $x$ with $0 \leq x_{0}
\leq x \leq x_{1} \leq \infty$. Size increases with time, in the same
way for all individuals of the population, and the growth rate is
given by the function $g(x)$ which is assumed not to depend on
environmental factors. Moreover it satisfies $g(x) \geq 0$ and
$g(x_{0})=0$. $m$ denotes the mortality rate and $\beta(a,\hat{x},x)$
denotes the average number of offspring of size $x$ produced per unit
of time by an individual of age $a$ and size $\hat{x}$ and
$P(t)=\int_{0}^{a_{1}}\int_{x_{0}}^{x_{1}}u(t,a,x)\,dx\,da$. Here, we
denote by $\Omega = (0,a_1) \times (x_0,x_1)$ the domain of definition
of the equation.

Many versions of the model \eqref{eq:age-size}, both linear and
nonlinear, have been studied, for instance in \cite{MD} and also in
\cite{TZ} where a more general nonlinear model containing an arbitrary
number of structured variables is considered. The usual space to study
these models is $L^{1}(\Omega)$. By using essentially the same
ingredients as in the previous subsection, one can prove the following
theorem that we state without proof.

\begin{thm}
  \label{thm:age-size}
  We assume the following:
  \begin{enumerate}
  \item $m: \Omega \times \R \to \R$ is a nonnegative
    function satisfying a condition similar to the one in Theorem
    {\rm\ref{thm:selection-mutation}}: for each $p \in \R$, $m(\cdot,
    p) \in W^{1,\infty}$, and for each $R > 0$ there exists $L_m > 0$
    such that
    \begin{equation*}
      \norm{m(\cdot, p_1) - m(\cdot, p_2)}_{1,\infty}
      \leq
      L_m |p_1 - p_2|
      \end{equation*}
      for all $p_1, p_2 \in [-R,R]$.
    \item $g\in W^{1,\infty}([x_0,x_1])$ with $g(0)=0$ and
    $g(x_1)>0$.
  \item The map $\beta: \Omega \to \mathcal{M}([x_0,x_1])$
    assigns $(a,\hat x) \mapsto \beta(a,\hat x,\cdot)$ and
    verifies that $W^{1,\infty}(\Omega,\mathcal{M}([x_0,x_1]))$, i.e,
    it is bounded and there exists $L_\beta > 0$ such that
    \begin{equation*}
      \|\beta(a_1,\hat x_1,\cdot) - \beta(a_2,\hat x_2,\cdot)\|
      \leq
      L_\beta (|a_1-a_2|+|\hat x_1 -\hat x_2|)
    \end{equation*}
    for all $(a_1,\hat x_1),(a_2,\hat x_2) \in \Omega$.
  \end{enumerate}
  Then the initial boundary value problem to \eqref{eq:age-size} is well-posed in the sense of
  Remark {\rm\ref{rem:support2}}.
\end{thm}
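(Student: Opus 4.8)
The plan is to follow the proof of Theorem~\ref{thm:age} almost verbatim: rewrite \eqref{eq:age-size} as an abstract equation of the form \eqref{eq:abstract} posed on all of $\R^2$, apply Theorem~\ref{thm:well-posedness}, and then recover the solution on $\Omega$ by showing its support stays inside a forward‑invariant region of the characteristic flow. First I would extend $m$, $g$ and $\beta$ from $\Omega=(0,a_1)\times(x_0,x_1)$ to $\R^2$: the growth rate $g$ by $g\equiv 0$ on $(-\infty,x_0]$ and, past $x_1$, by a bounded nonnegative Lipschitz continuation (so that $g\in W^{1,\infty}(\R)$, $g\ge 0$, $g(x_0)=0$), and $m$, $\beta$ by mirror symmetry as in Theorem~\ref{thm:age}. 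Writing $(a,x)$ for the joint structure variable, setting $F(a,x):=(1,g(x))$, and encoding the renewal condition at $a=0$ as a measure‑valued source, equation \eqref{eq:age-size} becomes
\begin{equation*}
  \partial_t u + \nabla\cdot(F u) = N(t,u), \qquad N(t,u) := N_1(u) + n_2(u)\,\delta_{a=0},
\end{equation*}
with $N_1(u):=-m(a,x,P(u))\,u$, $P(u):=\int_\Omega u$, and $n_2(u)(x):=\int_\Omega\beta(a,\hat x,x)\,u(a,\hat x)\,d\hat x\,da$, which is of the form \eqref{eq:abstract}.

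Next I would check Hypotheses \eqref{H1}--\eqref{H5}. Here \eqref{H1} is part of the standing assumptions, \eqref{H2} holds because $g\in W^{1,\infty}$ makes $F$ bounded and Lipschitz on $\R^2$, and \eqref{H3} follows from \eqref{H4} since $N$ does not depend on $t$. For \eqref{H4} I would argue term by term. The term $N_1(u)$ is handled exactly as the term $m(x,P)u$ in \eqref{eq:spe} (compare \eqref{eq:t2}), using Lemma~\ref{lem:product-bound} together with the Lipschitz condition on $m$ in its last variables and the Lipschitz dependence of $P$ on $u$. For the renewal term I would use, as in the proof of Theorem~\ref{thm:age}, the identity $\norm{w\,\delta_{a=0}}_{\mathcal{M}(\R^2)}=\norm{w}_{\mathcal{M}(\R)}$, which reduces matters to bounding $\norm{n_2(u)-n_2(v)}_{\mathcal{M}(\R)}$; this term has exactly the shape of the $b$-- and $\gamma$--terms estimated in \eqref{eq:3rd-term}, with $\beta(a,\hat x,x)$ in the role of $b(y)\gamma(x,y)$, the ingredient needed being that for $\psi\in\mathcal{L}$ the map $(a,\hat x)\mapsto\int\beta(a,\hat x,x)\,\psi(x)\,dx$ lies in a fixed ball of $W^{1,\infty}(\Omega)$, which is immediate from $\beta\in W^{1,\infty}(\Omega,\mathcal{M}([x_0,x_1]))$. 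Finally \eqref{H5} follows, as in all the previous models, from the boundedness of $m$ and of $\beta$.

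Theorem~\ref{thm:well-posedness} then yields a unique maximal solution $u$ of the extended equation on $\R^2$, and it remains to locate its support. The closed set $\Sigma:=[0,\infty)\times[x_0,\infty)$ is forward invariant under the flow $X_t$ of $F$: the $a$-coordinate obeys $\dot a=1$, and since $g(x_0)=0$ with $g$ Lipschitz, the line $\{x=x_0\}$ is an invariant barrier for $\dot x=g(x)$, so trajectories starting in $\{x\ge x_0\}$ remain there. Because $u_0$ is supported in $\overline\Omega\subseteq\Sigma$, one can now argue as at the end of the proof of Theorem~\ref{thm:age}: given $T>0$ and $\phi_T\in\mathcal{C}^\infty_0(\R^2\setminus\Sigma)$, solve the backward transport equation $\partial_t\phi+\partial_a\phi+g(x)\,\partial_x\phi=0$ on $(0,T)$ with terminal datum $\phi_T$ (understood along characteristics); forward invariance of $\Sigma$ gives $\phi(t,\cdot)\equiv 0$ on $\Sigma$ for every $t\in[0,T]$, hence $\frac{d}{dt}\int\phi\,u=\int\phi\,(N_1(u)+n_2(u)\,\delta_{a=0})=0$, because $N_1(u(t))$ is supported wherever $u(t)$ is and $n_2(u(t))\,\delta_{a=0}$ is supported on $\{a=0\}\times[x_0,x_1]\subseteq\Sigma$. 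Thus $\int\phi_T\,u(T)=\int\phi(0,\cdot)\,u_0=0$ for all such $\phi_T$, so $u(t)$ stays supported in $\Sigma$. (Equivalently, since $X_t\#$ and $N(t,\cdot)$ both preserve the property ``supported in $\Sigma$'', this can be read off directly from the Picard iterates of $\Gamma$, as in Lemma~\ref{lem:positivity}.) Restricting $u$ to $\Omega$ then gives a solution of \eqref{eq:age-size} in the sense of Remark~\ref{rem:support2}, the mass reaching the outflow boundaries $\{a=a_1\}$ and $\{x=x_1\}$ simply leaving $\Omega$.

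The step I expect to be the main obstacle is precisely this support/invariance argument: in contrast with the purely age‑structured model of Theorem~\ref{thm:age}, where the characteristic flow merely transports mass toward larger ages so that $\R^2_+$ is automatically invariant, here one must also control the size transport generated by $g$ near both ends of the size interval. This is exactly where the structural hypotheses $g\ge 0$, $g(x_0)=0$ and $g\in W^{1,\infty}$ are used, and where the extension of $g$ past $x_1$ and the correct notion of solution on the (possibly bounded) domain $\Omega$, cf.\ Remark~\ref{rem:support2}, must be set up with some care; the remaining verifications of \eqref{H1}--\eqref{H5} are routine adaptations of the computations already carried out for \eqref{eq:spe} and in Theorem~\ref{thm:age}.
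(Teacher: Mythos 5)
Your proposal is correct and follows essentially the route the paper intends: the paper states this theorem without a detailed proof, saying it uses the same ingredients as the age-structured case of Theorem~\ref{thm:age} together with the extension-to-$\R^2$ and non-incoming-characteristics argument of Remark~\ref{rem:support2}, and your term-by-term verification of \eqref{H4}--\eqref{H5} plus the support/invariance argument for the flow of $(1,g(x))$ is exactly that. The only cosmetic difference is that you make explicit the forward-invariant set $[0,\infty)\times[x_0,\infty)$ and the treatment of the outflow boundaries $a=a_1$, $x=x_1$, which the paper leaves to the remark.
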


\begin{rem}\label{rem:support2}
  Let us mention that the extension outside the realistic domain
  $\Omega$ to $\R^2$ of the model ingredients $m$, $\beta$, and $g$
  while meeting the conditions in Theorem \ref{thm:well-posedness} may
  be done in many different ways. Once one has an extended equation in
  $\R^2$, Theorem \ref{thm:well-posedness} applies, and all solutions
  to the extended equations lead to the same solution once restricted
  to $\Omega$. This is due to the fact that the characteristics
  associated to the transport field $(1,g(x))$ for the age and size
  variables $(a,x)$ are not incoming at the boundaries: $a=a_1$,
  $x=x_0$, and $x=x_1$. A similar argument as in the proof of Theorem
  \ref{thm:age} shows that if the solutions for these extended systems
  are zero initially in the set of $a<0$, then they remain so for all
  times.
\end{rem}


\end{document}